\newtheorem{theorem}{Theorem}[section]
\newtheorem{proposition}[theorem]{Proposition}
\newtheorem{lemma}[theorem]{Lemma}
\newtheorem{corollary}[theorem]{Corollary}
\theoremstyle{definition}
\newtheorem{definition}[theorem]{Definition}
\newtheorem{example}[theorem]{Example}
\theoremstyle{remark}
\newcommand{\nc}{\newcommand}
\newcommand{\delete}[1]{}
\nc{\mlabel}[1]{\label{#1}  
{\hfill \hspace{1cm}{\bf{{\ }\hfill(#1)}}}}
\nc{\mcite}[1]{\cite{#1}{{\bf{{\ }(#1)}}}}  
\nc{\mref}[1]{\ref{#1}{{\bf{{\ }(#1)}}}}  
\nc{\mbibitem}[1]{\bibitem[\bf #1]{#1}} 
\nc{\mlabel}[1]{\label{#1}}  
\nc{\mcite}[1]{\cite{#1}}  
\nc{\mref}[1]{\ref{#1}}  
\nc{\mbibitem}[1]{\bibitem{#1}} 
\nc{\tred}[1]{\textcolor{red}{#1}}
\nc{\tblue}[1]{\textcolor{blue}{#1}}
\nc{\tgreen}[1]{\textcolor{green}{#1}}
\nc{\tpurple}[1]{\textcolor{purple}{#1}}
\nc{\btred}[1]{\textcolor{red}{\bf #1}}
\nc{\btblue}[1]{\textcolor{blue}{\bf #1}}
\nc{\btgreen}[1]{\textcolor{green}{\bf #1}}
\nc{\btpurple}[1]{\textcolor{purple}{\bf #1}}
\nc{\li}[1]{\tblue{LG:#1}}
\nc{\cm}[1]{\tblue{CB:#1}}
\nc{\nancy}[1]{\tred{NG:#1}}
\nc{\bin}[2]{ (_{\stackrel{\scs{#1}}{\scs{#2}}})}  
\nc{\binc}[2]{ \left (\!\! \begin{array}{c} \scs{#1}\\
    \scs{#2} \end{array}\!\! \right )}  
\nc{\bincc}[2]{  \left ( {\scs{#1} \atop
    \vspace{-1cm}\scs{#2}} \right )}  
\nc{\egf}[2]{E_{#1, #2}}
\nc{\sarray}[2]{#1 \ \Big |\  #2 }
\nc{\ep}{\varepsilon}
\nc{\disp}[1]{\displaystyle{#1}}
\nc{\la}{\longrightarrow}
\nc{\rar}{\rightarrow}
\nc{\dar}{\downarrow}
\nc{\bs}{\bar{S}}
\nc{\dap}[1]{\downarrow \rlap{$\scriptstyle{#1}$}}
\nc{\dfpair}[2]{\left[ {{#1}\atop{#2}}\right]}
\nc{\uap}[1]{\uparrow \rlap{$\scriptstyle{#1}$}}
\nc{\defeq}{\stackrel{\rm def}{=}}
\nc{\dis}[1]{\displaystyle{#1}}
\nc{\dotcup}{\ \displaystyle{\bigcup^\bullet}\ }
\nc{\hcm}{\ \hat{,}\ }
\nc{\hts}{\hat{\shpr}}
\nc{\lts}{\stackrel{\leftarrow}{\shpr}}
\nc{\rts}{\stackrel{\rightarrow}{\shpr}}
\nc{\hcirc}{\hat{\circ}}
\nc{\catpr}{\diamond_l}
\nc{\rcatpr}{\diamond_r}
\nc{\lapr}{\diamond_a}
\nc{\lepr}{\diamond_e}
\nc{\ot}{\otimes}
\nc{\ora}[1]{\stackrel{#1}{\rar}}
\nc{\ola}[1]{\stackrel{#1}{\la}}
\nc{\scs}[1]{\scriptstyle{#1}}
\nc{\sss}{\subsubsection}
\nc{\mrm}[1]{{\rm #1}}
\nc{\margin}[1]{\marginpar{\rm #1}}   
\nc{\dirlim}{\displaystyle{\lim_{\longrightarrow}}\,}
\nc{\invlim}{\displaystyle{\lim_{\longleftarrow}}\,}
\nc{\msh}{\ast}
\nc{\mvp}{\vspace{0.3cm}}
\nc{\svp}{\vspace{2cm}}
\nc{\vp}{\vspace{8cm}}
\nc{\nsvs}{\vspace{-.5cm}}
\nc{\proofbegin}{\noindent{\bf Proof: }}
\nc{\proofend}{$\blacksquare$ \vspace{0.3cm}}
\nc{\sha}{\mbox{ \cyr X}}  
\font\cyrs=wncyr5
\nc{\ssha}{\mbox{\cyrs X}}
\nc{\shap}{\,\sha\,}
\nc{\csha}{\widehat{\sha}}
\nc{\ncsha}{\mbox{\cyr X}^{NC}}
\nc{\ncshao}{\mbox{\cyr X}^{NC,0}}
\nc{\lc}{\lfloor}
\nc{\rc}{\rfloor}
\nc{\ox}{\bar{\frakx}}
\nc{\shpr}{\diamond}    
\nc{\shprc}{\shpr_c}
\nc{\spr}{\cdot}
\nc{\vep}{\varepsilon}
\nc{\mnote}[1]{{Note: {#1}}}
\nc{\shs}{\hspace{1cm}}
\nc{\onetree}{{\, \bullet \, }}
\nc{\frakTx}{\mathcal F}
\nc{\sqmon}[1]{\langle #1\rangle}
\nc{\free}[1]{\bar{#1}}
\nc{\oT}{\overline{\mathfrak T}}
\nc{\oF}{\overline{F}}
\nc{\smskip}{\smallskip}
\nc{\nsmskip}{\vspace{-.2cm}}
\nc{\nmedskip}{\vspace{-.5cm}}
\nc{\nbigskip}{\vspace{-1cm}}
\nc{\Aut}{\mrm{Aut}}
\nc{\BCH}{\mrm{BCH}}
\nc{\CK}{\mrm{CK}}
\nc{\mchar}{\mrm{Char}}
\nc{\End}{\mrm{End}}
\nc{\Gal}{\mrm{Gal}}
\nc{\GL}{\mrm{GL}}
\nc{\Hom}{\mrm{Hom}}
\nc{\id}{\mrm{id}}
\nc{\im}{\mrm{im}}
\nc{\incl}{\mrm{incl}}
\nc{\LR}{\mrm{LR}}
\nc{\RB}{\mrm{RB}}
\nc{\MS}{\mrm{ms}}
\nc{\OS}{\mrm{os}}
\nc{\SL}{\mrm{SL}}
\nc{\Spec}{\mrm{Spec}}
\nc{\Tr}{\mrm{Tr}}
\nc{\mtr}{\mrm{tr}}
\nc{\rba}{\mrm{RBA}}
\nc{\Alg}{\mathbf{Alg}}
\nc{\Ass}{\mathbf{Ass}}
\nc{\Bax}{\mathbf{Bax}}
\nc{\Coalg}{\mathbf{Coalg}}
\nc{\Dend}{\mathbf{DD}}
\nc{\DT}{\mathbf{DT}}
\nc{\Dias}{\mathbf{Dias}}
\nc{\bfk}{{\bf k}}
\nc{\bfone}{{\bf 1}}
\nc{\base}[1]{{e_{#1}}}
\nc{\detail}{\marginpar{\bf More detail}
    \noindent{\bf Need more detail!}
    \svp}
\nc{\Diff}{\mathbf{Diff}}
\nc{\Fil}{\mathrm{Fil}}
\nc{\gap}{\marginpar{\bf Incomplete}\noindent{\bf Incomplete!!}
    \svp}
\nc{\Leib}{\mathbf{Leib}}
\nc{\Lie}{\mathbf{Lie}}
\nc{\remarks}{\noindent{\bf Remarks: }}
\nc{\Rings}{\mathbf{Rings}}
\nc{\Sets}{\mathbf{Sets}}
\nc{\Vect}{\mathbf{Vec}}
\nc{\bfA}{{\mathbf A}}
\nc{\bfB}{{\mathbf B}}
\nc{\bfC}{{\mathbf C}}
\nc{\bfD}{{\mathbf D}}
\nc{\bfE}{{\mathbf E}}
\nc{\bfF}{{\mathbf F}}
\nc{\bfG}{{\mathbf G}}
\nc{\bfH}{{\mathbf H}}
\nc{\bfI}{{\mathbf I}}
\nc{\bfJ}{{\mathbf J}}
\nc{\bfK}{{\mathbf K}}
\nc{\bfR}{{\mathbf R}}
\nc{\BA}{{\Bbb A}}
\nc{\CC}{{\Bbb C}}
\nc{\DD}{{\Bbb D}}
\nc{\EE}{{\Bbb E}}
\nc{\FF}{{\Bbb F}}
\nc{\GG}{{\Bbb G}}
\nc{\HH}{{\Bbb H}}
\nc{\LL}{{\Bbb L}}
\nc{\NN}{{\Bbb N}}
\nc{\PP}{{\Bbb P}}
\nc{\QQ}{{\Bbb Q}}
\nc{\RR}{{\Bbb R}}
\nc{\TT}{{\Bbb T}}
\nc{\VV}{{\Bbb V}}
\nc{\ZZ}{{\Bbb Z}}
\nc{\cala}{{\mathcal A}}
\nc{\calb}{{\mathcal B}}
\nc{\calc}{{\mathcal C}}
\nc{\cald}{\mathcal{D}}
\nc{\cale}{{\mathcal E}}
\nc{\calf}{{\mathcal F}}
\nc{\calg}{{\mathcal G}}
\nc{\calh}{{\mathcal H}}
\nc{\cali}{{\mathcal I}}
\nc{\call}{{\mathcal L}}
\nc{\calm}{{\mathcal M}}
\nc{\caln}{{\mathcal N}}
\nc{\calo}{{\mathcal O}}
\nc{\calp}{{\mathcal P}}
\nc{\calr}{{\mathcal R}}
\nc{\calt}{{\mathcal T}}
\nc{\calu}{{\mathcal U}}
\nc{\calw}{{\mathcal W}}
\nc{\calx}{{\mathcal X}}
\nc{\CA}{\mathcal{A}}
\nc{\fraka}{{\frak a}}
\nc{\frakA}{{\frak A}}
\nc{\frakb}{{\frak b}}
\nc{\frakB}{{\frak B}}
\nc{\frakF}{{\frak F}}
\nc{\frakg}{{\frak g}}
\nc{\frakm}{{\frak m}}
\nc{\frakM}{{\frak M}}
\nc{\frakp}{{\frak p}}
\nc{\frakS}{{\frak S}}
\nc{\frakT}{{\frak T}}
\nc{\frakx}{{\mathbf x}}
\nc{\frakX}{{\mathfrak X}}
\font\cyr=wncyr10 
\begin{document}
\allowdisplaybreaks

\title[Generating functions and Rota--Baxter algebras]{Generating functions from the viewpoint of Rota-Baxter algebras}

\author{Nancy Shanshan Gu}
\address{Center for Combinatorics, LPMC-TJKLC, Nankai
University, Tianjin 300071, P. R. China}
\email{gu@nankai.edu.cn}

\author{Li Guo}
\address{Department of Mathematics and Computer Science,
         Rutgers University,
         Newark, NJ 07102, USA}
\email{liguo@rutgers.edu}

\date{\today}

\maketitle

\begin{abstract} We study generating functions in the context of Rota-Baxter algebras. We show that exponential generating functions can be naturally viewed
in a very special case of complete free commutative Rota-Baxter algebras. This allows us to use free Rota-Baxter algebras to give a broad class of algebraic structures in which generalizations of generating functions can be studied.
We generalize the product formula and composition formula for exponential power series. We also give generating functions both for known number families such as Stirling numbers of the second kind and partition numbers, and for new number families such as those from not necessarily disjoint partitions and partitions of multisets.
\end{abstract}


\tableofcontents
\maketitle

\setcounter{section}{0}

\section{Introduction}

The importance of generating functions is well-known~\mcite{St,Wi}. We view generating functions in the framework of Rota-Baxter algebra whose importance in combinatorics was already noted by Rota~\mcite{Ro1,Ro4}. This viewpoint leads to a broad generalization of exponential generating functions.

There are many types of generating functions, such as the power series (or ordinary) generating functions $f(z)=\sum\limits_{n\geq 0} a_n z^n$, exponential generating functions $f(z)=\sum\limits_{n\geq 0} a_n \frac{z^n}{n!}$ and multi-variable generating functions. For some number sequences $(a_n)_{n\geq 0}$, their exponential generating functions have good expressions that are not available if power series generating functions are used. Such examples include the Bernoulli numbers and Bell numbers. Further, multi-variable generating functions encode multi-indexed number families that cannot be represented by one variable generating functions.

One way to view the power series generating functions
and exponential generating functions in the same framework is that
they each give a way to encode a number sequence $a_n, n\geq 0$ as
the coefficients of a linear combination with respect to a basis
of the power series algebra $\RR[[z]]$. The basis is
$z^n, n\geq 0$ for power series generating functions and
is the divided powers $z^n/n!, n\geq 0$ for exponential generating functions. The distinction
is the two different bases for the same power series ring. But it is also beneficial to view the difference externally: $v_k=z^k, k\geq 0$ is the standard
basis of the algebra
$$ \cala:= \prod_{k\geq 0} \RR\,v_k,\  \mbox{\ with componentwise product\ }v_m v_n =v_{m+n},$$
while $w_k=z^k/k!$ is the standard basis of the divided power algebra
$$ \calb:=\prod_{k\geq 0} \RR\, w_k, \mbox{\ with componentwise product\ } w_m w_n = \binc{m+n}{m} w_{m+n}.$$

We can take this point of view further and consider the following
general framework for generating functions:
A complete filtered $\RR$-algebra is a $\RR$-algebra $A$ with ideals $A_n,
n\geq 0$, such that $A_mA_n\subseteq A_{m+n}$ and
$A$ is complete with respect to the metric on $A$ induced by
$A_n$. In other words, the natural map
$$A\to \invlim A/A_n$$
is bijective. Let $\calu:=\{u_j, j\in J\},$ be a basis of $A$ that is compatible
with its filtration in the sense that $\calu \cap A_k$ is a basis
of $A_k$, $k\geq 0$.
Then a {\bf $\calu$-generating function} of a family of numbers
$a_j\in \RR, j\in J$, is the element
$\sum_{j\in J} a_j u_j$
in $A$. In this context, a power series generating function is
a $\calu$-generating function when $\calu$ is taken to be
the basis $v_k=\{z^k, k\geq 0\}$ in the complete filtered algebra
$$ \RR[[z]]= \prod_{k\geq 0} \RR\, z^k \cong \prod_{k\geq 0} \RR\, v_k =\cala$$
and an exponential generating function is a $\calu$-generating function when $\calu$ is taken
to be the basis $\{w_k=z^k/k!, k\geq 0\}$ in the complete
filtered algebra
$$ \RR[[z]]=\prod_{k\geq 0} \RR\, \frac{z^k}{k!}\cong \prod_{k\geq 0} \RR\, w_k=\calb.$$
In both cases, the complete filtration on $\RR[[z]]$ is given
by the ideals $z^n\RR[[z]]$ which is also $u_k \cala$ (resp.
$v_k \calb$).

Of course such a formal definition in such generality is of little use unless
\begin{enumerate}
\item
it can be naturally related to the ordinary generating functions or exponential generating functions;
\item
it is useful in the study of number sequences and number families.
\end{enumerate}

We will show that free Rota-Baxter algebras do give a generalization that satisfies these conditions.
In Section~\mref{sec:rba} we review the construction of free
commutative Rota-Baxter algebras and show that their completions
give a large class of complete filtered algebras.
In Section~\mref{sec:gen} we show that the simplest case of such complete free Rota-Baxter
algebras gives the exponential generating functions. We then focus on a ``twisted" variation of this case, namely on the $\lambda$-exponential generating functions. We generalize the classical Product Formula and Composition Formula for exponential generating functions to the twisted case. As applications, we give generating functions for Stirling numbers and for enumerations of not necessarily disjoint partitions. We consider another instance of complete free Rota-Baxter algebras in Section~\mref{sec:more}. By using $q$-series and applying the stuffle technique from the study of multiple zeta values in number theory, we show that elements of these algebras provide interesting generating functions for both number sequences such as factorials of square numbers, triangular numbers, pentagonal numbers, and multi-indexed families of numbers such as those from partitions of multisets.

\section{Free commutative complete Rota-Baxter algebras}
\mlabel{sec:rba}
In this section, we provide the algebraic framework for our extension of exponential generating functions. Let $\lambda\in \RR$ be a constant. A {\bf Rota--Baxter algebra} of weight $\lambda$ is a pair $(R,P)$ where $R$ is a unitary $\bfk$-algebra and $P:R\to R$ is a linear operator such that
\begin{equation}
P(x)P(y)=P(xP(y))+P(P(x)y)+\lambda P(xy),
\mlabel{eq:rbe}
\end{equation}
for any $x,\,y\in R$. Often $\theta=-\lambda$ is used, especially in the physics literature.
Let $(R,P)$ and $(R',P')$ be Rota-Baxter algebras of weight $\lambda$. A {\bf Rota-Baxter algebra homomorphism} $f$ from $(R,P)$ to $(R',P')$ is an algebra homomorphism
$f:R\to R'$ such that $f\circ P = P' \circ f$.

The study of Rota-Baxter algebras was started by G. Baxter~\mcite{Ba} in 1960 and was popularized largely by Rota and his school in the 1960s and 70s~\mcite{Ro1,Ro3} and again in 1990s~\mcite{Ro4}. In the recently years, there have been several interesting developments of Rota--Baxter algebras in
theoretical physics and mathematics, including quantum field theory, Yang--Baxter
equations, shuffle products, operads, Hopf algebras,
combinatorics and number theory~\cite{Bai,C-K1,EGK3,Gu2,G-Z}. See~\mcite{Gub} for further details.

\subsection{Free commutative Rota-Baxter algebras}
We recall the construction of free commutative Rota-Baxter algebras in terms of mixable shuffles.
See~\mcite{G-K1,G-K2,Gu3} for details.

Given a commutative algebra $\bfk$ which will often be taken as $\RR$, $\lambda\in \bfk$, and a $\bfk$-algebra $A$,
the free commutative Rota-Baxter $\bfk$-algebra on $A$ is defined to be a Rota-Baxter $\bfk$-algebra
$(\sha_{\bfk,\lambda}(A),P_A)$ together with a $\bfk$-algebra homomorphism
$j_A:A\to \sha_{\bfk,\lambda}(A)$ with the property that, for any Rota-Baxter $\bfk$-algebra
$(R,P)$ and any $\bfk$-algebra homomorphism $f:A\to R$, there is a unique Rota-Baxter
$\bfk$-algebra homomorphism $\tilde{f}:(\sha_{\bfk,\lambda}(A),P_A)\to (R,P)$ such that
$j_A\circ \tilde{f}=f$ as $\bfk$-algebra homomorphisms.

One realization of this free commutative Rota-Baxter algebra is given by the mixable shuffle Rota-Baxter algebra.
The mixable shuffle Rota-Baxter algebra is a pair $(\sha_{\bfk,\lambda}(A),P_A)$, where
$\sha_{\bfk,\lambda}(A)$ is a $\bfk$-algebra in which

\begin{itemize}
\item
the $\bfk$-module structure is given by
the direct sum
$$\bigoplus_{n=1}^\infty A^{\otimes n},\ {\rm\ where\ }
A^{\otimes n}=\underbrace{A\otimes_\bfk\ldots \otimes_\bfk A}_{n-{\rm factors}};$$
\item
the multiplication is given by the augmented mixable shuffle product $\diamond$, recursively defined on
$A^{\otimes m} \otimes A^{\otimes n}$ by
\begin{eqnarray*}
a_0 \diamond (b_0\otimes b_1 \otimes \ldots \otimes b_n)& =& a_0b_0 \otimes b_1\otimes \ldots
\otimes b_n, \\
(a_0\otimes a_1\otimes \ldots \otimes a_m)\diamond b_0
& =& a_0b_0 \otimes a_1 \otimes \ldots \otimes a_m,
\ a_i,\, b_j\in A^{\otimes 1}=A,
\end{eqnarray*}
 and
\begin{eqnarray}
\mlabel{eq:shuf}
\lefteqn{(a_0\otimes a_1\otimes \ldots \otimes a_m)\diamond (b_0\otimes b_1 \otimes \ldots
\otimes b_n) } \\
&=& (a_0 b_0) \otimes \big ((a_1\otimes \ldots \otimes a_m)\diamond
(1\otimes b_1\otimes \ldots \otimes b_n)\big ) \notag \\
&& +
(a_0 b_0) \otimes \big ((1\otimes a_1 \otimes \ldots \otimes a_m)\diamond (b_1\otimes \ldots
\otimes b_n)\big) \notag \\
&& + \lambda
a_0 b_0 \otimes \big ((a_1\otimes \ldots \otimes a_m)\diamond (b_1\otimes \ldots \otimes
b_n)\big),\ a_i,\ b_j\in A
\notag
\end{eqnarray}
\end{itemize}
with the convention that
$$ a\diamond (1\ot b)=a\ot b,\ (1\ot a)\diamond b = b\ot a,\
    a\diamond b = ab,\ \mbox{\ for\ } a, b\in A.$$
The Baxter operator $P_A$ is defined by
$$P_A(a_1\otimes \ldots \otimes a_m)=1\otimes a_1\otimes \ldots \otimes a_m,\
a_1\otimes \ldots \otimes a_m \in A^{\otimes m},\
m\geq 1.$$
Since the mixable shuffle product is compatible with the product on $A$, we will often
suppress the notation $\diamond$. We will also suppress $\bfk$ and $\lambda$ from
$\sha_{\bfk,\lambda}(A)$ when there is not danger of confusion.

Note that assuming $P_A$ is a Rota-Baxter operator and thus satisfies Eq.~(\mref{eq:rbe}), then Eq.~(\mref{eq:shuf}) follows.
For example, we have
\begin{eqnarray*}
\lefteqn{(a_0\otimes a_1\otimes a_2) (b_0\otimes b_1) }\\
&=&a_0b_0\otimes \left ( (a_1\otimes a_2)(1\otimes b_1)
+ b_1 (1\otimes a_1\otimes a_2)
+ \lambda (a_1\otimes a_2) b_1 \right ) \\
&=& a_0b_0\otimes \left ( a_1 \otimes (a_2 (1\otimes b_1)+(b_1 (1\otimes a_2))
    + \lambda a_2 b_1)
+ b_1 \otimes a_1\otimes a_2
+ \lambda a_1 b_1 \otimes a_2 \right ) \\
&=& a_0b_0 \otimes \left ( a_1 \otimes a_2 \otimes b_1 + a_1 \otimes
b_1 \otimes a_2
    + \lambda a_1 \otimes a_2 b_1 + b_1 \otimes a_1 \otimes a_2 + \lambda a_1 b_1 \otimes a_2
    \right ).
\end{eqnarray*}

\begin{theorem} {\bf(}\cite[Theorem 4.1]{G-K1}{\bf)}
\label{thm:shua}
For any $\bfk$-algebra $A$,
$(\sha(A),P_A)$, together with the natural embedding
$j_A:A\rightarrow \sha(A)$,
is a free Baxter $\bfk$-algebra on $A$ (of weight $\lambda$) in the sense that
the triple $(\sha(A),P_A,j_A)$ satisfies the following
universal property:
For any Baxter $\bfk$-algebra $(R,P)$ and any
$\bfk$-algebra map
$\varphi:A\rar R$, there exists
a unique Baxter $\bfk$-algebra homomorphism
$\tilde{\varphi}:(\sha(A),P_A)\rar (R,P)$ such that
the  diagram
\[\xymatrix{
A \ar[rr]^(0.4){j_A} \ar[drr]_{\varphi}
    && \sha(A) \ar[d]^{\tilde{\varphi}} \\
&& R } \]
commutes.
\end{theorem}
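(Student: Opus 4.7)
The plan is to verify three things: (i) $\sha(A)$ with the mixable shuffle product $\diamond$ is a commutative $\bfk$-algebra; (ii) $P_A$ is a Rota-Baxter operator of weight $\lambda$ on it; and (iii) the universal property.

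For (i), commutativity of $\diamond$ is immediate from the symmetric form of Eq.~(\mref{eq:shuf}). Associativity $(x \diamond y) \diamond z = x \diamond (y \diamond z)$ for $x \in A^{\ot \ell}$, $y \in A^{\ot m}$, $z \in A^{\ot n}$ would be proved by induction on $\ell+m+n$, expanding both sides via the three-term recursion and matching terms using associativity of the product on $A$. This is bookkeeping-heavy but routine. For (ii), applying the recursion with $a_0 = b_0 = 1$ to $P_A(x) \diamond P_A(y) = (1 \ot x) \diamond (1 \ot y)$ yields directly the identity $P_A(x) P_A(y) = P_A(x \diamond P_A(y)) + P_A(P_A(x) \diamond y) + \lambda P_A(x \diamond y)$; the mixable shuffle recursion is precisely engineered to make this automatic.

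For (iii), the key identity is
$$a_1 \ot a_2 \ot \cdots \ot a_n = j_A(a_1) \diamond P_A\bigl(j_A(a_2) \diamond P_A(\cdots \diamond P_A(j_A(a_n)) \cdots)\bigr),$$
obtained by iterating $a \diamond (1 \ot b) = a \ot b$. Consequently, any Rota-Baxter algebra homomorphism $\tilde\varphi$ with $\tilde\varphi \circ j_A = \varphi$ is forced to satisfy
$$\tilde\varphi(a_1 \ot \cdots \ot a_n) = \varphi(a_1) \, P\bigl(\varphi(a_2) \, P(\cdots P(\varphi(a_n)) \cdots)\bigr),$$
which gives uniqueness. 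For existence, I would define $\tilde\varphi$ by this explicit formula, extend $\bfk$-linearly, and check (a) multiplicativity $\tilde\varphi(x \diamond y) = \tilde\varphi(x)\tilde\varphi(y)$ on pure tensors and (b) compatibility $\tilde\varphi \circ P_A = P \circ \tilde\varphi$. Part (b) is immediate from the formula, since $P_A$ simply prepends $1 \ot$.

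Multiplicativity is the main obstacle. It will be proved by induction on the combined tensor length. For tensors $x = a_0 \ot x'$, $y = b_0 \ot y'$ with $x', y'$ nonempty, the left side $\tilde\varphi(x \diamond y)$ expands via Eq.~(\mref{eq:shuf}) into three summands involving $\tilde\varphi$ applied to shorter tensors, while the right side $\tilde\varphi(x) \tilde\varphi(y)$ contains a product of the form $P(u)P(v)$ which, by the Rota-Baxter axiom in $R$, splits into three summands of a matching shape. The inductive hypothesis then identifies the two. The base cases, when one of $x, y$ lies in $A = A^{\ot 1}$ so that no application of $P$ appears, need separate but straightforward verification from the conventions $a \diamond b = ab$ and $(1 \ot a) \diamond b = b \ot a$. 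This is exactly the same pattern as the verification in (ii), reflecting the fact that the recursion for $\diamond$ is the universal consequence of the Rota-Baxter relation.
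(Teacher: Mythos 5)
The paper itself offers no proof of this theorem: it is quoted verbatim from \cite[Theorem 4.1]{G-K1}, so there is nothing internal to compare against. Judged on its own, your outline is the standard argument (essentially the one in the cited reference and in \cite{Gub}), and its overall architecture --- commutativity and associativity of $\diamond$, the Rota--Baxter identity for $P_A$ from the $a_0=b_0=1$ case of Eq.~(\mref{eq:shuf}), the factorization $a_1\ot\cdots\ot a_n=j_A(a_1)\diamond P_A(j_A(a_2)\diamond P_A(\cdots))$ forcing uniqueness, and the inductive verification of multiplicativity of $\tilde{\varphi}$ via the Rota--Baxter axiom in $R$ --- is correct. Two points deserve more than the weight you give them. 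First, associativity of $\diamond$, which you dismiss as ``bookkeeping-heavy but routine,'' is in fact the most substantial verification in the whole construction; the induction on $\ell+m+n$ produces nine terms on each side whose matching is not term-by-term but requires regrouping, and in the literature this is handled either through the closed-form mixable-shuffle (stuffle) description of the product or through Hoffman's quasi-shuffle formalism, so a complete write-up cannot simply wave at it. Second, in the final step of your multiplicativity induction you rearrange $\varphi(a_0)\varphi(b_0)P(u)P(v)$ into $\bigl(\varphi(a_0)P(u)\bigr)\bigl(\varphi(b_0)P(v)\bigr)$, which silently uses commutativity of $R$; this is harmless here because the universal property is asserted within the category of commutative Rota--Baxter algebras (the convention in \cite{G-K1}), but it should be stated, since the construction does not give the free object over noncommutative targets.
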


Alternatively, $\sha(A)$ can defined to be
the tensor product algebra $A\ot \sha^+(A)$ where the multiplication on $\sha^+(A)=\bigoplus_{k\geq 1} A^{\ot k}$ is
given in the explicit form by the mixable shuffle product~\mcite{G-K1} and in the recursive form by
a generalization~\mcite{E-G1,G-Z} of the {\bf quasi-shuffle algebra} defined by Hoffman~\mcite{Ho2} in the study of multiple zeta values. It is also shown that $\sha^+(A)$ is the free commutative tridendriform algebra~\mcite{Lo}.

Quasi-shuffle product is also known as harmonic product~\mcite{Ho1}
and coincides with the stuffle product~\mcite{3BL,Br} in the study
of multiple zeta values. Variations of the stuffle product have also
appeared in ~\mcite{Ca2,Eh}. It is shown~\mcite{E-G1} to be the same
as the mixable shuffle product~\mcite{G-K1,G-K2} which is also
called overlapping shuffles~\mcite{Ha} and generalized
shuffles~\mcite{Go3}, and can be interpreted in terms of Delannoy
paths~\mcite{A-H,Fa,Lo}.

\subsection{Complete free Rota-Baxter algebras}

For a given commutative algebra $A$, it is easy to see that
the submodules
$$\Fil^k \sha(A):= \bigoplus_{i \geq k} A^{\ot i},\ i\geq 0$$
of $\sha(A)$ are ideals of $\sha(A)$. They are in fact Rota-Baxter
ideals of $\sha(A)$ in the sense that $P_A( \Fil^k \sha(A))\subseteq \Fil^k \sha(A)$. Further, $\cap_{k\geq 0} \Fil^k \sha(A)=0.$
Thus $$\csha(A):= \invlim \sha(A)/\Fil^k \sha(A) \cong \prod_{k\geq 0} A^{\ot k}$$
is a complete filtered algebra and contains $\sha(A)$ as a
subalgebra. It coincides with the complete
free commutative Rota-Baxter algebra defined in~\mcite{G-K2}.

\section{Generating functions from Rota-Baxter algebras}
\mlabel{sec:gen}
We first interpret exponential generating functions in terms of the special case of free commutative Rota-Baxter algebra when the ring $A$ is the base field $\RR$ and when the weight $\lambda\in \RR$ is zero. We then consider the nonzero weight case (still taking $A=\RR$) and generalize to this case the Product Formula and Composition Formula for exponential generating functions. Applications of these formulas are provide. Other instances of free Rota-Baxter algebras will be investigated in the next section. 

\subsection{Connection with exponential power series}

Let $A=\RR$. Then
\begin{equation}
\sha_\lambda(A)=\sha_\lambda(\RR)=\oplus_{k\geq 1} \RR^{\ot k}
= \oplus_{k\geq 1} \RR\, \bfone_k,
\mlabel{eq:freer0}
\end{equation}
where $\bfone_k=\underbrace{1\ot \cdots \ot 1}_{(k+1)-{\rm terms}}$.
The augmented shuffle product in this special case is
\begin{equation}
 \bfone_m \diamond \bfone_n= \sum_{k=0}^{\min(m,n)} \lambda^k \binc{m+n-k}{m}\binc{m}{k} \bfone_{m+n-k}.
\mlabel{eq:freer}
\end{equation}

When $\lambda=0$, we have
$$ \bfone_m \diamond \bfone_n= \binc{m+n}{m} \bfone_{m+n},$$
giving the divided powers. We then have
$\csha_\lambda(\RR)=\prod_{k\geq 0} \RR \bfone_{k}$.
This is also the cofree differential algebra~\mcite{Ke}
with the differential operator $d(x_n)=x_{n-1}, d(1)=0$.

Denote $x_n=x^n/n!$ (divided powers). Then as an algebra,
$$ \RR[x]=\oplus_{n\geq 0} \RR x_n$$
with multiplication given by $x_m x_n =\binc{m+n}{m} x_{m+n}$.
This extends to an isomorphism
$$\RR[[x]] \to \widehat{\sha}(\RR)=\prod_{k\geq 0} \RR\, \bfone_k,
\quad x_k\mapsto \bfone_k,\ k\geq 0.$$

Through this isomorphism the theory of exponential generating function
is translated to a theory of generating functions in $\widehat{\sha}(\RR)$, and can be generalized
to $\widehat{\sha}_\lambda(A)$ for other algebras $A$ and other weight $\lambda$.
We will next set up the foundation of our approach by proving the product formula and composition formula for $\lambda$-exponential generating functions.

\subsection{Product formula for $\lambda$-exponential generating functions}

We first consider free Rota-Baxter algebra $\sha(\RR)$ on $\RR$ of weight 1. It is given by Eq.~(\mref{eq:freer0}) with product given
by Eq.~(\mref{eq:freer}) with $\lambda =1$.

We quote from~\mcite{St} the following simple yet fundamental property of exponential generating functions which underlies the prominent role played by
these generating functions. See~\mcite{St} for details.
For any function $f:\NN\to \RR$, let
$$E_f(x)=\sum_{k\geq 0} f(n) \frac{x^n}{n!}$$
be the exponential generating function of $f$.
\begin{proposition} {\rm (Stanley~\cite[Proposition 5.1.1]{St})}
Let $\#Y$ be the cardinality of a finite set $Y$. Given functions
$f,g: \NN\to \RR$, define a new function $h:\NN\to \RR$ by the rule
\begin{equation}
 h(\#X)=\sum_{(S,T)} f(\#S)g(\#T),
 \mlabel{eq:cup}
\end{equation}
where $X$ is a finite set, and where $(S,T)$ ranges over all weak ordered partitions of $X$ into two blocks, i.e., $S\cap T=\emptyset$ and $S\cup T=X$. Then
\begin{equation}
E_h(x)=E_f(x)E_g(x).
\mlabel{eq:exp}
\end{equation}
\mlabel{thm:exp}
\end{proposition}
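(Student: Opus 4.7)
The plan is to compute the product $E_f(x) E_g(x)$ as a Cauchy product in the divided-power basis and extract the coefficient of $x^N/N!$ for each $N \geq 0$. First I would multiply the two series termwise, obtaining a double sum $\sum_{m,n \geq 0} f(m) g(n)\, x^{m+n}/(m!\,n!)$, and reindex by $N = m+n$. Using the identity $\tfrac{1}{m!\,n!} = \binc{N}{m}\tfrac{1}{N!}$ for $m+n=N$, the product becomes
\begin{equation*}
E_f(x) E_g(x) \;=\; \sum_{N \geq 0} \frac{x^N}{N!} \sum_{m+n=N} \binc{N}{m} f(m) g(n),
\end{equation*}
so it suffices to show that the inner sum equals $h(N)$.

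Next I would translate the definition of $h$ into a binomial convolution. Fix a finite set $X$ with $\#X = N$. A weak ordered partition $(S,T)$ of $X$ with $\#S = m$ is uniquely determined by the $m$-subset $S \subseteq X$, since $T = X \setminus S$ is forced by $S \cap T = \emptyset$ and $S \cup T = X$; hence there are exactly $\binc{N}{m}$ such partitions, and each contributes $f(m)g(N-m)$ to the sum in~(\mref{eq:cup}). Grouping by $m = \#S$ therefore yields
\begin{equation*}
h(N) \;=\; \sum_{m=0}^{N} \binc{N}{m} f(m) g(N-m),
\end{equation*}
which matches the inner sum above. Substituting back gives $E_f(x) E_g(x) = \sum_{N\geq 0} h(N)\,x^N/N! = E_h(x)$, which is~(\mref{eq:exp}).

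The entire argument is a formal power-series manipulation plus a single bijective count, so I do not anticipate any real obstacle. The one point worth emphasizing, since it motivates the broader framework of the paper, is that the combinatorial content of the proposition is concentrated in the identification of $\binc{N}{m}$ with the number of weak ordered partitions of an $N$-set into blocks of sizes $m$ and $N-m$; this is precisely the reason the divided-power basis $x^n/n!$ — rather than the monomial basis $x^n$ — is the correct encoding for sums indexed by such set partitions, and it foreshadows why the generalization to $\widehat{\sha}_\lambda(\RR)$ with its mixable-shuffle product will capture the $\lambda$-weighted analogue.
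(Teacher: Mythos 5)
Your proof is correct and is the standard argument; the paper itself does not reprove this proposition (it simply cites Stanley), but your computation mirrors exactly the structure the paper uses for its generalization in Proposition~\mref{pp:exp1}, namely expanding the product in the divided-power basis and identifying the resulting binomial (there, multinomial) coefficient with the count of ordered set decompositions.
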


\begin{definition}
For $f:\NN\to \RR$, we call \begin{equation}
\egf{\lambda}{f}:=\sum_{k\geq 0} f(k) \bfone_k\in \csha_\lambda(\RR)
\mlabel{eq:legf}
\end{equation}
the {\bf $\lambda$-exponential generating function} (or {\bf  $\lambda$-EGF} in short) of $f$. When there is no danger of confusion, we will suppress $\lambda$ from the notation.
\mlabel{de:legf}
\end{definition}
When $\lambda=0$, this recovers the exponential generating function.
We prove the following generalization of Proposition~\mref{thm:exp}.

\begin{proposition}
Given functions $f,g: \NN\to \RR$, define a new function $h:\NN\to \RR$ by the rule
\begin{equation} h(\#X)=\sum_{(S,T)\in \calp(X)^2, S\cup T=X} \lambda^{\#(S\cap T)} f(\#S)g(\#T),
\mlabel{eq:cup1}
\end{equation}
where $X$ is a finite set and $\calp(X)$ is the power set of $X$. Thus in the sum $(S,T)$ ranges over all ordered pairs of subsets of $X$ such that $S\cup T=X$ (but not necessarily $S\cap T=\emptyset$). Then
\begin{equation}
\egf{\lambda}{h}=\egf{\lambda}{f}\, \egf{\lambda}{g}
\in \csha_\lambda(\RR).
\mlabel{eq:exp1}
\end{equation}
\mlabel{pp:exp1}
\end{proposition}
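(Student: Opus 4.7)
The plan is to compute both sides of \eqref{eq:exp1} as formal series in the basis $\{\bfone_N\}_{N\geq 0}$ of $\csha_\lambda(\RR)$ and match coefficients. On the left, using Definition~\ref{de:legf} and the definition \eqref{eq:cup1} of $h$,
\begin{equation*}
\egf{\lambda}{h}=\sum_{N\geq 0}\bfone_N\sum_{(S,T)\in \calp([N])^2,\, S\cup T=[N]}\lambda^{\#(S\cap T)}f(\#S)g(\#T),
\end{equation*}
so I will regroup the inner sum according to the integers $m=\#S$, $n=\#T$, $k=\#(S\cap T)$. On the right, by Definition~\ref{de:legf} and the explicit product formula \eqref{eq:freer},
\begin{equation*}
\egf{\lambda}{f}\,\egf{\lambda}{g}=\sum_{m,n\geq 0} f(m)g(n)\sum_{k=0}^{\min(m,n)}\lambda^k\binc{m+n-k}{m}\binc{m}{k}\bfone_{m+n-k},
\end{equation*}
which I will likewise index by $N:=m+n-k$.

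The combinatorial core is the following counting step. For a fixed $N$-element set $X$ and fixed integers $m,n,k$, the number of ordered pairs $(S,T)\in\calp(X)^2$ with $S\cup T=X$, $\#S=m$, $\#T=n$, $\#(S\cap T)=k$ is nonzero only if $m+n-k=N$, in which case it equals $\binc{N}{k}\binc{N-k}{m-k}$: one chooses the intersection $S\cap T$ of size $k$ inside $X$, then the complement $S\setminus T$ of size $m-k$ inside $X\setminus(S\cap T)$, after which $T\setminus S=X\setminus S$ is forced. This is the only substantive step; the rest reduces to the elementary binomial identity
\begin{equation*}
\binc{N}{k}\binc{N-k}{m-k}=\frac{(m+n-k)!}{k!\,(m-k)!\,(n-k)!}=\binc{m+n-k}{m}\binc{m}{k},
\end{equation*}
valid whenever $N=m+n-k$.

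Putting these pieces together, the coefficient of $\bfone_N$ in $\egf{\lambda}{h}$ becomes
\begin{equation*}
\sum_{\substack{m,n,k\geq 0\\ m+n-k=N}} \binc{N}{k}\binc{N-k}{m-k}\,\lambda^k f(m)g(n),
\end{equation*}
while the coefficient of $\bfone_N$ in $\egf{\lambda}{f}\egf{\lambda}{g}$ is
\begin{equation*}
\sum_{\substack{m,n,k\geq 0\\ m+n-k=N,\, k\leq \min(m,n)}} \binc{m+n-k}{m}\binc{m}{k}\,\lambda^k f(m)g(n).
\end{equation*}
The constraint $k\leq\min(m,n)$ is automatic in the first sum (since $S\cap T\subseteq S$ and $S\cap T\subseteq T$), and by the binomial identity the two expressions agree termwise. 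I expect no genuine obstacle; the only place requiring care is the bijection underlying the count of pairs $(S,T)$ with prescribed union and intersection sizes, after which the equality \eqref{eq:exp1} follows immediately and specializes to Proposition~\ref{thm:exp} when $\lambda=0$, since then only the $k=0$ (i.e., $S\cap T=\emptyset$) terms survive.
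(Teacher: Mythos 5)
Your proposal is correct and follows essentially the same route as the paper's own proof: both expand the product via the explicit formula \eqref{eq:freer}, reindex by $N=m+n-k$, and identify the multinomial coefficient $\binc{m+n-k}{k,\,m-k,\,n-k}=\binc{m+n-k}{m}\binc{m}{k}$ with the number of ordered pairs $(S,T)$ of prescribed sizes having union $X$ and intersection of size $k$. Your explicit two-step count (choose $S\cap T$, then $S\setminus T$, with $T$ then forced) is just a slightly more detailed phrasing of the paper's observation that this multinomial counts partitions of $X$ into the three blocks $S\cap T$, $S\setminus T$, $T\setminus S$.
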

\begin{proof}
We have
\begin{eqnarray*}
\egf{\lambda}{f}\, \egf{\lambda}{g}&=&
\big(\sum_{m\geq 0} f(m)\bfone_m\big) \big(\sum_{n\geq 0} g(n)\bfone_n\big) \\
&=& \sum_{m,n\geq 0} f(m)g(n) \bfone_m \bfone_n\\
&=& \sum_{m,n\geq 0} f(m)g(n) \sum_{i=0}^{\min(m,n)} \lambda^i \binc{m+n-i}{m}\binc{m}{i} \bfone_{m+n-i}
\end{eqnarray*}
by Eq.~(\mref{eq:freer}). Note that $
\binc{m+n-i}{m}\binc{m}{i}=\binc{m+n-i}{i,m-i,n-i}.$ So setting
$u=m+n-i$, $u_1=i, u_2=m-i, u_3=n-i$, we have $u=u_1+u_2+u_3$ and
\begin{equation*}
\egf{\lambda}{f}\, \egf{\lambda}{g}= \sum_{u=0}^\infty \Big(
\sum_{\tiny\begin{array}{c}(u_1,u_2,u_3)\in \NN^3\\ u_1+u_2+u_3=u
\end{array}} \lambda^{u_1}\binc{u}{u_1,u_2,u_3} f(u_1+u_2)g(u_1+u_3)
\Big) \bfone_u.
\end{equation*}
Now the theorem follows since, for given $(u_1,u_2,u_3)\in \NN^3$,
$\binc{u}{u_1,u_2,u_3}$ is the number of ways of partitioning a size $u$ set into three subsets of size $u_1,u_2$ and $u_3$. It is also the number of ways of taking subsets $S$ of size $u_1+u_2$ and $T$ of size $u_1+u_3$ of a set $X$ of size $u_1+u_2+u_3$ such that $S\cup T=X$ and $\#(S\cap T)=u_1$.
\end{proof}

{}From the proof of Proposition~\mref{pp:exp1} we have the following explicit formula for $h(\#X)$ in Eq.~(\mref{eq:cup1}).
\begin{corollary}
The numbers in Eq.~(\mref{eq:cup1}) is given by

\begin{equation}
h(u)=\sum_{\tiny\begin{array}{c}(u_1,u_2,u_3)\in \NN^3\\ u_1+u_2+u_3=u \end{array}} \lambda^{u_1}\binc{u}{u_1,u_2,u_3}
f(u_1+u_2)g(u_1+u_3), u\geq 0.
\mlabel{eq:exph}
\end{equation}
\mlabel{co:exp1}
\end{corollary}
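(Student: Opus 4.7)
The plan is to read the formula off coefficient-by-coefficient from the identity $\egf{\lambda}{h}=\egf{\lambda}{f}\,\egf{\lambda}{g}$ established in Proposition~\mref{pp:exp1}. Since $\{\bfone_u\}_{u\geq 0}$ is a topological basis of $\csha_\lambda(\RR)$ compatible with the filtration $\Fil^k \sha(\RR)$, equality of two elements of $\csha_\lambda(\RR)$ is the same as equality of their $\bfone_u$-coefficients for every $u\geq 0$, so the task reduces to extracting that coefficient from each side.

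First I would note that by the definition in Eq.~(\mref{eq:legf}), the $\bfone_u$-coefficient of $\egf{\lambda}{h}$ is $h(u)$ itself. Second, I would revisit the computation carried out in the proof of Proposition~\mref{pp:exp1}: after the substitution $u=m+n-i$, $u_1=i$, $u_2=m-i$, $u_3=n-i$, the coefficient of $\bfone_u$ in $\egf{\lambda}{f}\,\egf{\lambda}{g}$ is shown to be
\begin{equation*}
\sum_{\substack{(u_1,u_2,u_3)\in\NN^3 \\ u_1+u_2+u_3=u}} \lambda^{u_1}\binc{u}{u_1,u_2,u_3} f(u_1+u_2)\,g(u_1+u_3).
\end{equation*}
Equating the two coefficients yields Eq.~(\mref{eq:exph}).

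No real obstacle is expected, since the corollary is by design an immediate byproduct of the preceding proof; the only mild check is that for each fixed $u$ the above sum is finite, since it ranges over weak compositions of $u$ into three nonnegative parts, so extracting the $\bfone_u$-coefficient from the infinite sum presents no convergence subtlety.
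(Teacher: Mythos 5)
Your proposal is correct and matches the paper's argument: the corollary is obtained exactly by reading off the $\bfone_u$-coefficient from the computation already carried out in the proof of Proposition~\mref{pp:exp1}, which is what the paper does (it simply cites that proof). Your added remarks about the topological basis and finiteness of the sum are fine but not needed beyond what the paper implicitly assumes.
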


In analogy to \mcite{St}, we have the following combinatorial significance of Proposition~\mref{pp:exp1}. Suppose we have two types of structures, say $\alpha$ and $\beta$, which can be put on a finite set $X$. We assume that the allowed structures depend only on the cardinality of $X$. A new ``combined" type of structure, denoted $\alpha\cup_\lambda \beta$, can be put on $X$ by placing structures of type $\alpha$ and $\beta$ on subsets $S$ and $T$, respectively, of $X$ such that $S\cup T=X$ (but not necessarily $S\cap T=\emptyset$). If $f(k)$ and $g(k)$ denote the sums of allowed structures on a $k$-set of type $\alpha$ and $\beta$, respectively, then the right-hand side of Eq.~(\mref{eq:cup1}) counts the sum of the allowed structures of type $\alpha \cup_\lambda \beta$ with a ``correction factor" $\lambda^{\# (S\cap T)}$ that measures the overlap of $S$ and $T$. In particular, when $\lambda=0$, then we recover the classical case in~\mcite{St}.

\begin{example}
As an application, suppose a group $X$ of $n$ children participates in an event where they can play two games $\alpha$ and $\beta$. Of course a child can play either one or both or none of the games. Let the group of children who play game $\alpha$ and $\beta$ be $S$ and $T$, respectively. Suppose game $\alpha$ is a simple hit-or-missing game (such as shorting a ball). So there are $2^{\# S}$ possible outcomes. Suppose game $\beta$ is a competition that results a linear order of the participants. So there are $(\# T)!$ possible outcomes. Let $h(n)=h(\# X)$ be the possible outcomes of the event. Let $\egf{\lambda}{f}=\sum_{k\geq 0} f(k) \bfone_k$ and $\egf{\lambda}{g}=\sum_{k\geq 0} g(k) \bfone_k$ be the $\lambda$-EGF for the outcomes of game $\alpha$ and $\beta$, respectively. Then the $\lambda$-EGF for the whole event is
$$ \egf{\lambda}{h}=\egf{\lambda}{f}\egf{\lambda}{g}.$$
\end{example}

Using an inductive argument, we obtain the following generalization of Proposition 5.1.3 in~\mcite{St}.

\begin{theorem}
$(${\rm Generalized Product Formula}$)$ Fix $k\in \PP$ with $k\geq
2$ and functions $f_1,\cdots,f_k:\NN\to \RR$. Define a new function
$h:\NN\to \RR$ by
\begin{equation}
 h(\# T)=\sum_{(T_1,\cdots,T_k)} \lambda^{\#T_1+\cdots+\#T_k-\#(T_1\cup \cdots \cup T_k)} f_1(\#T_1)\cdots f_k(\#T_k),
\mlabel{eq:cupm1}
\end{equation}
or
\begin{equation}
 h(\# T)=\sum_{(T_1,\cdots,T_k)} \lambda^{\eta (T_1,\cdots,T_k)} f_1(\#T_1)\cdots f_k(\#T_k),
\mlabel{eq:cupm2}
\end{equation}
where $(T_1,\cdots,T_k)$ ranges over all weakly ordered (not necessarily disjoint) subsets $T_1,\cdots, T_k$ of $S$ such that $T_1\cup \cdots \cup T_k=T$
and
$$\eta(T_1,\cdots,T_k)=\sum_{\tiny \begin{array}{c} I\subseteq [k] \\ \#I\geq 2\end{array}} (-1)^{\# I}
\# \big(\bigcap_{i\in I} T_i\big).$$
Then
\begin{equation}
\egf{\lambda}{h}= \prod_{i=1}^k \egf{\lambda}{f_i}
\mlabel{eq:expm}
\end{equation}
in $\csha_\lambda(\RR)$.
\mlabel{thm:expm}
\end{theorem}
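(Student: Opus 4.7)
The plan is to proceed by induction on $k\geq 2$, taking Proposition~\mref{pp:exp1} as the base case and also as the multiplicative engine driving the induction. Before starting, I would first verify that the two descriptions of $h$ in (\mref{eq:cupm1}) and (\mref{eq:cupm2}) really coincide. This is just inclusion--exclusion:
$$\#(T_1\cup\cdots\cup T_k) = \sum_{i=1}^k \#T_i + \sum_{\substack{I\subseteq[k]\\ \#I\geq 2}} (-1)^{\#I+1}\,\#\bigl(\bigcap_{i\in I} T_i\bigr),$$
so subtracting this expression from $\sum_{i=1}^k \#T_i$ cancels the singleton terms and reverses the sign of the remaining ones, producing $\eta(T_1,\ldots,T_k)$.

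For the inductive step, assume the theorem for any $k-1$ functions, and let $g:\NN\to\RR$ be the function associated via (\mref{eq:cupm1}) to $f_1,\ldots,f_{k-1}$. The inductive hypothesis gives $\egf{\lambda}{g} = \prod_{i=1}^{k-1}\egf{\lambda}{f_i}$, and applying the binary Proposition~\mref{pp:exp1} to the pair $(g,f_k)$ then yields
$$\prod_{i=1}^k \egf{\lambda}{f_i} = \egf{\lambda}{g}\,\egf{\lambda}{f_k} = \egf{\lambda}{h'},$$
where
$$h'(\#X) = \sum_{\substack{(S,T_k)\in \calp(X)^2\\ S\cup T_k = X}} \lambda^{\#(S\cap T_k)}\, g(\#S)\, f_k(\#T_k).$$

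It remains to identify $h'$ with $h$. The natural bijection is: every weakly ordered $k$-tuple $(T_1,\ldots,T_k)$ of subsets of $X$ with $T_1\cup\cdots\cup T_k=X$ corresponds to the pair $(S,T_k)$ with $S:=T_1\cup\cdots\cup T_{k-1}$, together with the $(k-1)$-tuple $(T_1,\ldots,T_{k-1})$ of subsets of $S$ whose union is $S$. Substituting the expansion of $g(\#S)$ into $h'$, the total $\lambda$-exponent attached to the data $(T_1,\ldots,T_k)$ is
$$\#(S\cap T_k) + \sum_{i=1}^{k-1}\#T_i - \#S,$$
and the identity $\#(S\cap T_k) = \#S + \#T_k - \#(S\cup T_k)$ combined with $S\cup T_k = T_1\cup\cdots\cup T_k$ collapses this to $\sum_{i=1}^k \#T_i - \#(T_1\cup\cdots\cup T_k)$, precisely the exponent in the definition of $h$. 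This proves $h'=h$ and hence (\mref{eq:expm}).

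The only real obstacle is the cardinality bookkeeping when reassembling the $k$-fold sum out of a $(k-1)$-fold sum and a binary product; once the bijection above is written down explicitly, the exponent computation reduces to the one-line identity for $\#(S\cap T_k)$. No algebraic input beyond Proposition~\mref{pp:exp1} is needed.
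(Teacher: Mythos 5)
Your proof is correct and follows essentially the same route as the paper: induction on $k$ with Proposition~\ref{pp:exp1} as both the base case and the engine of the inductive step, the same decomposition of a $k$-tuple into the pair $(T_1\cup\cdots\cup T_{k-1},\,T_k)$ plus a $(k-1)$-tuple covering the first union, the same exponent bookkeeping via $\#(S\cap T_k)=\#S+\#T_k-\#(S\cup T_k)$, and the same inclusion--exclusion argument to reconcile (\ref{eq:cupm1}) with (\ref{eq:cupm2}). No substantive differences.
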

\begin{proof}
We first use induction on $k\geq 2$ to prove that the fuction
$h:\NN\to \RR$ defined by Eq.~(\mref{eq:cupm1} satisfies
Eq.~(\mref{eq:expm}). Since $\#(T_1\cap T_2)=\#T_1 +\#T_2-\#(T_1\cup
T_2)$, the case of $k=2$ is proved in Proposition~\mref{pp:exp1}.
Assume that the claim has been proved for $k=n\geq 2$ and consider
the case of $k=n+1$. Let $f_1,\cdots,f_{n+1}:\NN \to \RR$ be given.
Let $f_{1n}:\NN\to \RR$ be defined such that
$$\egf{\lambda}{f_{1n}}=\egf{\lambda}{f_1}\cdots \egf{\lambda}{f_n}.$$
Then by the induction hypothesis,
\begin{equation}
f_{1n}(\#T)=\sum_{(T_1,\cdots,T_n)} \lambda^{\#T_1+\cdots \#T_n-\#(T_1\cup \cdots \cup T_n)} f_1(\# T_1)\cdots f_n (\# T_n).
\mlabel{eq:f1}
\end{equation}
Since $\egf{\lambda}{f} =\egf{\lambda}{f_{1n}}\egf{\lambda}{f_{n+1}}$
by definition, by Proposition~\mref{pp:exp1} (that is, the case when $k=2$) we have
\begin{equation} f(\# X)=\sum_{\tiny{
\begin{array}{c} (X_1,X_2)\in \calp(X)^2 \\ X_1\cup
X_2=X\end{array}}} \lambda^{\#X_1+ \# X_2-\#(X_1\cup X_2)}
f_{1n}(\#X_1) f_{n+1}(\# X_2). \mlabel{eq:f2}
\end{equation}
Combining Eq.~(\mref{eq:f1}) and Eq.~(\mref{eq:f2}) we have
{\small
\begin{eqnarray*}
&& f(\# X)\\
&=& \hspace{-1.2cm}\sum_{\tiny{ \begin{array}{c} (X_1,X_2)\in \calp(X)^2 \\ X_1\cup X_2=X\end{array}}}
\hspace{-1cm} \lambda^{\#X_1+ \#X_2-\#(X_1\cup X_2)} \left ( \sum_{\tiny \begin{array}{c}(X_{1,1},\cdots, X_{1,n})\in \calp(X_1)^n\\ X_{1,1}\cup \cdots \cup X_{1,n}=X_1\end{array}}
\hspace{-1.5cm}\lambda^{\sum_{i=1}^n\#X_{1,i}-\#(\cup_{i=1}^n X_{1,i})}
\prod_{i=1}^n f_i(\#X_{1,i})\right ) f_{n+1}(\#X_2)
\\
&=&
\hspace{-1cm}\sum_{\tiny \begin{array}{c} (X_{1,1},\cdots, X_{1,n},X_2) \in \calp(X)^{n+1} \\ X_{1,1}\cup\cdots\cup X_{1,n}\cup X_{n+1} =X\end{array} }
\hspace{-1.5cm} \lambda^{\#X_1+ \#X_2-\#(X_1\cup X_2)+ \sum_{i=1}^n\#X_{1,i}-\#(\cup_{i=1}^n X_{1,i})}
f_1(\# X_{1,1}) \cdots f_n(\# X_{1,n}) f_{n+1}(\# X_{2})
\end{eqnarray*}
}
since any $(X_{1,1},\cdots, X_{1,n},X_2) \in \calp(X)^{n+1}$ such that $X_{1,1}\cup\cdots\cup X_{1,n}\cup X_{n+1} =X$ corresponds uniquely to a $(X_1,X_2)\in \calp(X)^2$ with $X_1\cup X_2=X$ together with a $(X_{1,1},\cdots,X_{1,n})\in \calp(X_1)^n$ with
$X_{1,1}\cup \cdots \cup X_{1,n} =X_1$. Thus to complete the induction we only need to show
\begin{align*}
&\#X_1+ \#X_2-\#(X_1\cup X_2)+ \sum_{i=1}^n\#X_{1,i}-\#(\cup_{i=1}^n X_{1,i}) \\
= &\#X_{1,1}+\cdots \#X_{1,n}+\#X_2 - \#(X_{1,1}\cup \cdots \cup X_{1,n} \cup X_2)
\end{align*}
which is clear since $X_1=X_{1,1}\cup \cdots \cup X_{1,n}$.

Now to finish proving the theorem, we just need to show that Eq.~(\mref{eq:cupm1}) agrees with Eq.~(\mref{eq:cupm2}), that is, $$ \#T_1+\cdots + \#T_k- \#(T_1\cup \cdots \cup T_k) =
\eta(T_1,\cdots, T_k).$$
This follows from the well-known Euler characteristic for the
subsets $T_1,\cdots,T_k$:
$$ \# \bigcup_{j=1}^k T_i = \sum_{\emptyset \neq I\subseteq [k]} (-1)^{\# I -1} \# (\cap_{i\in I} T_i)
=\#T_1+\cdots + \#T_k - \eta (T_1,\cdots,T_k).
$$
\end{proof}

For the differential operator $d_{A}$, we have $d_{A}(\bfone_n)=\bfone_{n-1}$,
$d_{A}(1)=0$, and the following result.
\begin{equation*}
d_{A}(E_{\lambda, f})=d_{A}(\sum_{n\geq 0}f(n)\bfone_n)=\sum_{n\geq 0}f(n+1)\bfone_n.
\end{equation*}
As the applications of Theorem \ref{thm:expm}, we have the following results.
\begin{corollary}
Let $S$ be a finite set. Given functions $f,g: \NN \to \RR$, define
new functions $h_1,\ h_2,\ h_3$, and $h_4$ on $\NN$ as follows:
\begin{eqnarray*}
h_1(\#S) &=& f(\#S)+g(\#S)\\
h_2(\#S) &=& (\#S)f(\#T)+\lambda (\#S)f(\#S), \qquad \text{where}\ \#T=\#S-1\\
h_3(\#S) &=& f(\#T) , \qquad \text{where}\ \#T=\#S+1\\
h_4(\#S) &=& (\#S)f(\#S)+\lambda(\#S)f(\#T), \qquad \text{where}\ \#T=\#S+1.
\end{eqnarray*}
Then
\begin{eqnarray*}
E_{\lambda, h_1} &=& E_{\lambda, f}+E_{\lambda, g}\\
E_{\lambda, h_2} &=& \bfone_1 \diamond E_{\lambda, f}\\
E_{\lambda, h_3} &=& d_A(E_{\lambda, f})\\
E_{\lambda, h_4} &=& \bfone_1 \diamond d_A(E_{\lambda, f}).
\end{eqnarray*}
\end{corollary}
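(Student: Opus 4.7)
The plan is to reduce all four identities to straightforward coefficient comparisons in $\csha_\lambda(\RR) = \prod_{k \geq 0} \RR\,\bfone_k$, using only the multiplication rule of Equation~(\mref{eq:freer}) together with the given action of $d_A$ on the basis. Identity (1) is immediate from $\RR$-linearity of $E_{\lambda,-}$: sum the defining series termwise. Identity (3) is equally direct, since $d_A(E_{\lambda,f}) = \sum_{n \geq 1} f(n)\bfone_{n-1} = \sum_{m \geq 0} f(m+1)\bfone_m = \sum_{m \geq 0}h_3(m)\bfone_m$.

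The real content lies in identity (2), which I would prove by computing $\bfone_1 \diamond \bfone_n$ explicitly. Specializing Equation~(\mref{eq:freer}) to $m=1$, the sum runs over $k \in \{0,1\}$ and yields
\begin{equation*}
\bfone_1 \diamond \bfone_n \;=\; (n+1)\bfone_{n+1} + \lambda\, n\, \bfone_n,
\end{equation*}
valid for all $n \geq 0$ (the $n=0$ case reduces to $\bfone_1$). Distributing over $E_{\lambda,f} = \sum_{n \geq 0} f(n)\bfone_n$, splitting the result into two sums, and reindexing $n+1 = m$ in the first sum gives
\begin{equation*}
\bfone_1 \diamond E_{\lambda,f} \;=\; \sum_{m \geq 0}\bigl(m\,f(m-1) + \lambda\,m\,f(m)\bigr)\bfone_m,
\end{equation*}
where the $m=0$ term vanishes automatically due to the factor $m$, so the absence of $f(-1)$ causes no issue. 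The bracketed coefficient is exactly $h_2(m)$, proving (2).

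Identity (4) is then a one-line consequence: by (3), $d_A(E_{\lambda,f}) = E_{\lambda,h_3}$ where $h_3(n) = f(n+1)$, and applying (2) to $h_3$ in place of $f$ gives
\begin{equation*}
\bfone_1 \diamond d_A(E_{\lambda,f}) \;=\; \sum_{m \geq 0}\bigl(m\,h_3(m-1) + \lambda\,m\,h_3(m)\bigr)\bfone_m \;=\; \sum_{m \geq 0}\bigl(m\,f(m) + \lambda\,m\,f(m+1)\bigr)\bfone_m,
\end{equation*}
which is $E_{\lambda,h_4}$. There is no real obstacle here; the only point that needs care is the low-index behaviour in (2), namely verifying that the general formula $\bfone_1 \diamond \bfone_n = (n+1)\bfone_{n+1} + \lambda n \bfone_n$ genuinely covers the $n=0$ case so that no separate term needs to be tracked. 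Everything else is bookkeeping.
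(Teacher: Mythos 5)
Your proof is correct: the specialization $\bfone_1 \diamond \bfone_n = (n+1)\bfone_{n+1} + \lambda n\,\bfone_n$ of Eq.~(\mref{eq:freer}) and the subsequent reindexing are exactly right, and you correctly handle the low-index edge cases. The paper omits this proof as ``easy,'' and your direct coefficient comparison is precisely the routine verification the authors intended.
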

The proof is easy and will be omitted.

\begin{corollary}$($\mcite{Gu3}$)$ For $\lambda \in \RR$, we have the generating function
\begin{equation*}
\frac{1}{1-(1 \otimes 1)u}=\sum_{n=0}^{\infty} \sum_{k=0}^nk!S(n,k)
\lambda^{n-k}u^n \bfone_k=\sum_{n=0}^{\infty} \sum_{k=0}^{\infty}n!S(n+k,n)\lambda^k u^{n+k} \bfone_n,
\end{equation*}
where $S(n,k), n, k\geq 0,$ are the Stirling numbers of the second kind.
\end{corollary}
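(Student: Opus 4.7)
The plan is to identify the left-hand side as the geometric series $\sum_{m\geq 0}\bfone_1^m u^m$ in the formal power series ring $\csha_\lambda(\RR)[[u]]$ and then compute the $m$-fold product $\bfone_1^m$ using the Generalized Product Formula (Theorem~\mref{thm:expm}). Since $1\otimes 1=\bfone_1$, expanding the geometric series reduces the task to expressing each $\bfone_1^m$ in the basis $\{\bfone_n\}_{n\geq 0}$.

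First I would take $f_1,\ldots,f_m:\NN\to\RR$ to be the indicator $f_i(j)=\delta_{j,1}$, so that $\egf{\lambda}{f_i}=\bfone_1$. Theorem~\mref{thm:expm} then yields $\bfone_1^m=\egf{\lambda}{h}$ with
$$h(\#T)=\sum_{(T_1,\ldots,T_m)}\lambda^{\sum_i\#T_i-\#(\cup_i T_i)}\prod_{i=1}^m f_i(\#T_i),$$
the sum ranging over $m$-tuples of subsets $T_i\subseteq T$ whose union is $T$. The factor $\prod f_i(\#T_i)$ forces every $T_i$ to be a singleton, so for $\#T=n$ such a tuple is the same as a surjection $[m]\twoheadrightarrow T$. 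The number of such surjections is $n!\,S(m,n)$, and the overlap exponent collapses to $m-n$. Hence
$$h(n)=\begin{cases}\lambda^{m-n}\,n!\,S(m,n), & 0\leq n\leq m,\\ 0, & n>m,\end{cases}$$
which gives $\bfone_1^m=\sum_{n=0}^m n!\,S(m,n)\,\lambda^{m-n}\,\bfone_n$. Multiplying by $u^m$ and summing over $m\geq 0$ yields the first stated equality (after renaming $m\to n$, $n\to k$), and the second equality follows from the substitution $m=n+k$, which converts the triangular double sum over $0\leq k\leq n$ into a sum over $(n,k)\in\NN^2$.

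The only step requiring any care is the bookkeeping in the Generalized Product Formula: one must verify that when each $T_i$ is a singleton and $\bigcup_i T_i$ has cardinality $n$, the weight exponent $\sum_i\#T_i-\#(\cup_i T_i)$ really equals $m-n$, and that tuples of singletons covering an $n$-set are enumerated by $n!\,S(m,n)$. Both points are elementary, so I do not anticipate a genuine obstacle; the content of the corollary is essentially the assertion that Theorem~\mref{thm:expm} applied to the constant generator $\bfone_1$ is the well-known surjection interpretation of Stirling numbers of the second kind, twisted by the weight $\lambda$.
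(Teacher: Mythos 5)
Your proposal is correct and follows essentially the same route as the paper: both expand the geometric series, apply the Generalized Product Formula (Theorem~\ref{thm:expm}) to the indicator function picking out $\bfone_1$, observe that the nonvanishing terms force each $T_i$ to be a singleton so that the covering tuples are exactly surjections counted by $k!\,S(n,k)$ with overlap exponent $n-k$, and then reindex to obtain the second equality. The only differences are notational.
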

\begin{proof}
Since $\frac{1}{1-(1 \otimes 1)u}=\sum_{n=0}^{\infty}(1 \otimes 1)^nu^n$, we set $E_{\lambda,h}=\sum_{k=0}^{\infty}h(k)\bfone_k=(1 \otimes 1)^n$ and $E_{\lambda,f}=\sum_{m=0}^{\infty}f(m)\bfone_m
=1 \otimes 1$. Hence, we have
\begin{equation*}
E_{\lambda,h}=E_{\lambda,f}^n=\underbrace{E_{\lambda,f} \diamond
E_{\lambda,f} \diamond \cdots \diamond E_{\lambda,f}}_n.
\end{equation*}
For $E_{\lambda,f}=1 \otimes 1$, we have
\begin{equation*}
f(m)=\left\{\begin{array}{cc}1,&\quad m=1,\\0,&\quad m\neq 1.\end{array}\right.
\end{equation*}
According to Theorem \ref{thm:expm}, we get
\begin{equation}
 h(\# T)=\sum_{(T_1,\cdots,T_n)} \lambda^{\#T_1+\cdots+\#T_n-\#(T_1\cup \cdots \cup T_n)} f(\#T_1)\cdots f(\#T_n),
\label{prod}
\end{equation}
where $(T_1,\cdots,T_n)$ ranges over all weak ordered (not necessarily disjoint) subsets $T_1,\cdots, T_n$ of $T$ such that $T_1\cup \cdots \cup T_n=T$.

Notice that the nonzero terms in the right hand side of \eqref{prod} are those which satisfy the condition $\#T_1=\ldots=\#T_n=1$. Therefore,
we can reformulate \eqref{prod} as follows.
\begin{equation*}
h(k)=\sum_{S_1,\ldots,S_k}\lambda^{n-k},
\end{equation*}
where $(S_1,\cdots,S_k)$ ranges over all the ordered partitions of $[n]$ such that $S_i \cap S_j=\emptyset$ for $1 \leq i,j \leq k$ and $S_1\cup \cdots \cup S_k=[n]$.
That is to say,
\begin{equation*}
h(k)=k!S(n,k)\lambda^{n-k}.
\end{equation*}
Hence, we get
\begin{equation}\label{1otimes1}
(1 \otimes 1)^n=\sum_{k=0}^n k!S(n,k)\lambda^{n-k} \bfone_k.
\end{equation}
Then the corollary follows.
\end{proof}

From the above proof, we have the following generation function.
\begin{corollary}For $\lambda \in \RR$, we have the generating function
\begin{equation*}
e^{(1 \otimes 1)u}=\sum_{n=0}^{\infty} \sum_{k=0}^nk!S(n,k)
\lambda^{n-k}u^n \bfone_k/n!=\sum_{n=0}^{\infty} \sum_{k=0}^{\infty}n!S(n+k,n)
\lambda^ku^{n+k} \bfone_n/(n+k)!.
\end{equation*}
\end{corollary}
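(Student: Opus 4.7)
The plan is to derive this corollary directly from equation~(\ref{1otimes1}) established in the proof of the preceding corollary, namely $(1\otimes 1)^n = \sum_{k=0}^n k!\,S(n,k)\,\lambda^{n-k}\,\bfone_k$. First, I would expand the exponential by its defining series in the power series algebra $\csha_\lambda(\RR)[[u]]$, writing
$$e^{(1\otimes 1)u} \;=\; \sum_{n=0}^\infty \frac{(1\otimes 1)^n\,u^n}{n!},$$
and then substitute this boxed identity term by term. This substitution produces
$$\sum_{n=0}^\infty \frac{u^n}{n!}\sum_{k=0}^n k!\,S(n,k)\,\lambda^{n-k}\,\bfone_k,$$
which is the first expression of the corollary.

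For the second equality, I would interchange the order of the resulting double sum and then reindex via $m = n-k$ to convert the triangular range $\{0 \le k \le n < \infty\}$ into the rectangular range $\{(k,m) \in \NN^2\}$. Renaming the summation variables ($k \leftrightarrow n$ and $m \leftrightarrow k$) then produces
$$\sum_{n=0}^\infty \sum_{k=0}^\infty n!\,S(n+k,n)\,\lambda^k\,u^{n+k}\,\bfone_n/(n+k)!,$$
as claimed. The interchange is legitimate because, working in $\csha_\lambda(\RR)[[u]]$, the coefficient of each monomial $\bfone_j u^N$ in the double sum is a finite sum, so no convergence issue arises.

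I do not anticipate any real obstacle: the corollary is essentially a bookkeeping restatement of equation~(\ref{1otimes1}) in exponential-generating-function form, and both steps above are purely formal manipulations of a grading-wise finite double series. This is presumably why the authors append the result with the short remark \emph{``From the above proof, we have the following generation function''} rather than giving a separate argument.
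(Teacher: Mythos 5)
Your proposal is correct and matches the paper's intent exactly: the authors give no separate argument beyond the remark that the result follows ``from the above proof,'' meaning precisely the substitution of $(1\otimes 1)^n=\sum_{k=0}^n k!\,S(n,k)\,\lambda^{n-k}\,\bfone_k$ into the exponential series $\sum_{n\geq 0}(1\otimes 1)^n u^n/n!$, followed by the reindexing you describe. Both steps are the formal manipulations you carried out, so there is nothing to add.
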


Setting
\begin{equation*}
E_{\lambda,I}:=\sum_{n=0}^{\infty}\bfone_n,
\end{equation*}
we consider the expression for $E_{\lambda,I}^k$.
\begin{corollary}
For $\lambda=1$, we have
\begin{equation*}
E_{1,I}^k=\sum_{n=0}^{\infty}(2^k-1)^n \bfone_n.
\end{equation*}
\end{corollary}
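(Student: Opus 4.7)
The plan is to apply the Generalized Product Formula (Theorem~\mref{thm:expm}) directly, with all $k$ functions equal to the constant function $1$. Specifically, define $f:\NN\to \RR$ by $f(n)=1$ for all $n\geq 0$, so that $E_{\lambda,f}=E_{\lambda,I}$. Setting $f_1=\cdots=f_k=f$ in Theorem~\mref{thm:expm} gives
$$E_{\lambda,I}^k = E_{\lambda,h}, \qquad h(\#T)=\sum_{(T_1,\ldots,T_k)} \lambda^{\#T_1+\cdots+\#T_k-\#(T_1\cup\cdots\cup T_k)},$$
where $(T_1,\ldots,T_k)$ ranges over all $k$-tuples of (not necessarily disjoint) subsets of $T$ whose union is $T$.

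Next I would specialize to $\lambda=1$, so that the weight $\lambda^{\#T_1+\cdots+\#T_k-\#(T_1\cup\cdots\cup T_k)}$ collapses to $1$ and $h(\#T)$ simply counts the number of $k$-tuples of subsets of $T$ covering $T$. Letting $n=\#T$, a standard element-wise argument finishes the computation: each of the $n$ elements of $T$ must be placed in at least one of the $k$ subsets, and for a fixed element there are $2^k$ choices of which subsets contain it, minus the single choice of being in none. Hence $h(n)=(2^k-1)^n$, and substituting into $E_{1,I}^k=\sum_{n\geq 0} h(n)\bfone_n$ yields the claimed identity.

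There is no serious obstacle: the corollary is essentially a direct specialization of Theorem~\mref{thm:expm}. The only minor point to verify is that $h(\#T)$ depends only on the cardinality of $T$, which is immediate from the symmetry of the defining sum under bijections of $T$. Thus the entire proof is a combination of one application of the Generalized Product Formula and the elementary count of surjective-cover tuples $(T_1,\ldots,T_k)$ of subsets of an $n$-element set.
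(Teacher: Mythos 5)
Your proposal is correct and follows essentially the same route as the paper: apply the Generalized Product Formula (Theorem~\ref{thm:expm}) with all $f_i$ the constant function $1$ at $\lambda=1$, then count the covering $k$-tuples $(T_1,\ldots,T_k)$ of $[n]$ elementwise, obtaining $2^k-1$ choices per element. The paper phrases this count as an explicit bijection with $n$-tuples of nonempty subsets of $[k]$, which is exactly your elementwise argument written out in full.
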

\begin{proof}
By Theorem \ref{thm:expm}, we have
\begin{equation}
E_{1,I}^k = (\sum_{n=0}^{\infty}\bfone_n)^k = \sum_{n=0}^{\infty}\sum_{\tiny (T_1,\cdots,T_k)}\bfone_n,
\mlabel{eq:e1i}
\end{equation}
where $(T_1,\cdots,T_k)$ ranges over all weakly ordered (not necessarily disjoint) subsets $T_1,\cdots, T_k$ of $[n]$ such that $T_1\cup \cdots \cup T_k=[n]$.
Let
$$f_i:[n]\to \{0, 1\}, \quad f_i(j)=\left \{\begin{array}{ll} 1, & j\in T_i, \\ 0, & j\neq T_i, \end{array} \right .  1\leq i\leq k,
$$
be the characteristic function of $T_i, 1\leq i\leq k.$ Then
$$S_j:=\{i\in [k]\,|\, f_i(j)\neq 0\},  1\leq j\leq n,$$
are nonempty subsets of $[k]$. Conversely, given nonempty subsets
$S_j, 1 \leq  j\leq n,$ define
$$g_j:[k]\to \{0, 1\}, \quad g_j(i)=\left\{\begin{array}{ll} 1, & i\in S_j, \\ 0, & i\not\in S_j\end{array} \right . 1\leq j\leq n.$$
Then
$$T_i:= \{ j\in [n]\,|\, g_j(i)\neq 0\}, \quad 1\leq i\leq k,$$
form a weakly ordered (not necessarily disjoint) subsets of $[n]$ such that $T_1\cup \cdots \cup T_k=[n].$ Thus the set of such weakly ordered subsets $\{T_1,\cdots,T_k\}$ is in bijection with the set of nonempty subsets $\{S_1,\cdots,S_n\}$ of $[k]$.
Therefore in Eq.~(\mref{eq:e1i}), we have
$$\sum_{\tiny (T_1,\cdots,T_k)} 1 = (2^k-1)^n,$$
hence the corollary.
\end{proof}

Now we give a notation that will be used in the next corollary.
\begin{definition}\label{B-cover}
For $k, \ell \geq 1$ and $\ell \leq n \leq k\ell$, let $B(n,k,\ell)$ denote the number of $k$-tuples $(T_1,\cdots,T_k)$
of (not necessarily disjoint) subsets $T_1,\cdots, T_k$ of $[n]$
such that $T_1 \cup \cdots \cup T_k=[n]$ and $\#T_i=\ell$ for $1
\leq i \leq k$. For other values of $n, k, \ell$, we set $B(n,k,\ell)=0$.
\end{definition}
\begin{corollary}\label{g-Bnkl}For $\ell \geq 1$, we have the generating function
\begin{equation*}
\frac{\bfone_\ell}{1-\bfone_\ell}=\sum_{k=1}^{\infty}(\bfone_\ell)^k=\sum_{k=1}^{\infty}\sum_{n=l}^{k\ell}\lambda^{k\ell-n}B(n,k,\ell)\bfone_n.
\end{equation*}
\end{corollary}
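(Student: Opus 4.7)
The plan is to apply the Generalized Product Formula (Theorem~\mref{thm:expm}) to the single function $f:\NN\to\RR$ given by the indicator $f(m)=1$ if $m=\ell$ and $f(m)=0$ otherwise. Under Definition~\mref{de:legf} its $\lambda$-EGF is $E_{\lambda,f}=\bfone_\ell$, so computing $(\bfone_\ell)^k$ in $\csha_\lambda(\RR)$ reduces to identifying the function $h$ produced by Eq.~\eqref{eq:cupm1} with $f_1=\cdots=f_k=f$, and then reading off the coefficient of each $\bfone_n$.

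First I would specialize Theorem~\mref{thm:expm} to this $f$: for a finite set $T$ with $\#T=n$,
$$h(n)=\sum_{(T_1,\ldots,T_k)}\lambda^{\#T_1+\cdots+\#T_k-\#(T_1\cup\cdots\cup T_k)}\,f(\#T_1)\cdots f(\#T_k),$$
the sum taken over weakly ordered (not necessarily disjoint) subsets of $T$ whose union is $T$. Because $f$ is the indicator of $\ell$-element subsets, only tuples with $\#T_i=\ell$ for every $i$ survive, and each such surviving tuple contributes the scalar $\lambda^{k\ell-n}$ (using $\sum_i\#T_i=k\ell$ and $\#(\bigcup_i T_i)=n$). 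By Definition~\ref{B-cover}, the remaining enumeration of such tuples is exactly $B(n,k,\ell)$, which vanishes outside the range $\ell\leq n\leq k\ell$. Substituting back yields
$$(\bfone_\ell)^k=E_{\lambda,h}=\sum_{n=\ell}^{k\ell}\lambda^{k\ell-n}B(n,k,\ell)\,\bfone_n.$$

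Summing over $k\geq 1$ then delivers the double sum in the statement, while the first equality in the corollary is the formal geometric series identity $x/(1-x)=\sum_{k\geq 1}x^k$ applied to $x=\bfone_\ell$ inside $\csha_\lambda(\RR)$. The main (and rather mild) obstacle is the combinatorial bookkeeping: checking that the indicator $f$ collapses the generalized product sum precisely onto tuples of $\ell$-subsets with union $T$, that the exponent of $\lambda$ simplifies on the nose to $k\ell-n$, and that the surviving enumeration matches Definition~\ref{B-cover} together with the stated index range. Once these verifications are in place, the corollary follows from a single invocation of Theorem~\mref{thm:expm}.
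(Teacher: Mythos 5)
Your proposal is correct and matches the paper's own proof essentially verbatim: both set $E_{\lambda,f}=\bfone_\ell$ with $f(m)=\delta_{\ell,m}$, invoke Theorem~\ref{thm:expm}, observe that the indicator collapses the sum onto $k$-tuples of $\ell$-subsets covering $[n]$ with weight $\lambda^{k\ell-n}$, and identify the count with $B(n,k,\ell)$. No gaps.
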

\begin{proof}
Set
\begin{equation*}
E_{\lambda,f}:=\bfone_\ell=\sum_{m=0}^{\infty}f(m)\bfone_m,
\quad
\text{ where } f(m)=\delta_{\ell,m}.
\end{equation*}
Then by Theorem \ref{thm:expm}, we have
\begin{align}
(\bfone_\ell)^k &= \sum_{n=0}^{\infty}\sum_{\tiny (T_1,\cdots,T_k)}\lambda^{\#T_1+\cdots+\#T_k-\#(T_1 \cup \cdots \cup T_k)}
f(\#T_1)\cdots f(\#T_k)\bfone_n \nonumber\\
&=\sum_{n=l}^{\ell k}\lambda^{k\ell-n}B(n,k,\ell)\bfone_n,\label{g-n-k-l}
\end{align}
giving the corollary.
\end{proof}
In Eq.~\eqref{g-n-k-l}, when $\ell=1$, we get Eq.~\eqref{1otimes1}; when $\lambda=0$, since $B(k\ell, k,\ell)=\frac{(k\ell)!}{(\ell!)^k}$, we obtain
\begin{equation*}
(1 \otimes 1^{\otimes \ell})^k=\frac{(k\ell)!}{(\ell!)^k}(1 \otimes 1^{\otimes k\ell}).
\end{equation*}
So we get the following result.
\begin{equation}\label{bfone_kl}
\frac{1}{1-\bfone_\ell}=\sum_{k=0}^{\infty}(\bfone_\ell)^k=\sum_{k=0}^{\infty}\frac{(k\ell)!}{(\ell!)^k}\bfone_{k\ell}.
\end{equation}

\subsection{Composition formula for $\lambda$-exponential generating functions}

We now give a generalization of the Composition Formula~\mcite{St}.
\begin{theorem}
{\rm (Composition Formula)~\cite[Theorem 5.1.4]{St}} Given functions
$f:\PP \to \RR$ and $g: \NN \to \RR$ with $g(0)=1$, define a new
function $h:\NN\to \RR$ by
\begin{eqnarray}
h(\# S) &=& \sum_{\pi=\{B_1,\cdots,B_k\}\in \Pi (S)}
f(\#B_1) \cdots f(\#B_k) g(k), \#S>0,
\mlabel{eq:part1}
\\
h(0)&=& 1,
\end{eqnarray}
where the sum ranges over all partitions $\pi=\{B_1,\cdots,B_k\}$ of the finite set $S$. Then
\begin{equation}
E_h(x)=E_g(E_f(x)).
\mlabel{eq:comp1}
\end{equation}
\mlabel{thm:comp1}
\end{theorem}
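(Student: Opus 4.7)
The plan is to deduce the Composition Formula from the Product Formula, combined with the formal-series identity
$$E_g(y) = \sum_{k\geq 0} g(k)\, \frac{y^k}{k!}$$
evaluated at $y = E_f(x)$. Since $f$ is defined only on $\PP$, we set $f(0)=0$, so $E_f(x) = \sum_{n\geq 1} f(n) x^n/n!$ has no constant term and the substitution $E_g(E_f(x))$ is a well-defined element of $\RR[[x]]$. Viewed through the isomorphism $\RR[[x]]\cong \csha_0(\RR)$ of Section~\mref{sec:gen}, this is the classical EGF specialization ($\lambda=0$) of the framework developed above.

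First I would apply the $k$-fold Product Formula (Theorem~\mref{thm:expm} with $\lambda=0$, or equivalently Stanley's Proposition~5.1.3) to $f_1=\cdots=f_k=f$. For $\lambda=0$ the inclusion--exclusion weight $\lambda^{\eta(T_1,\ldots,T_k)}$ forces the $T_i$ to be pairwise disjoint, so
$$E_f(x)^k = \sum_{n\geq 0}\Big( \sum_{(T_1,\ldots,T_k)} f(\#T_1)\cdots f(\#T_k)\Big)\frac{x^n}{n!},$$
with the inner sum running over ordered disjoint decompositions $[n] = T_1 \sqcup \cdots \sqcup T_k$ (possibly with empty blocks). Since $f(0)=0$, only decompositions into $k$ nonempty blocks contribute. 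Dividing by $k!$ to pass from ordered to unordered partitions gives
$$\frac{E_f(x)^k}{k!} = \sum_{n\geq 0}\Big( \sum_{\pi \in \Pi_k([n])} f(\#B_1)\cdots f(\#B_k)\Big)\frac{x^n}{n!},$$
where $\Pi_k([n])$ denotes set partitions of $[n]$ with exactly $k$ blocks.

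Finally I would multiply by $g(k)$ and sum over $k\geq 0$. The coefficient of $x^n/n!$ in $E_g(E_f(x))$ becomes
$$\sum_{k\geq 0} g(k) \sum_{\pi = \{B_1,\ldots,B_k\} \in \Pi_k([n])} f(\#B_1)\cdots f(\#B_k),$$
which is precisely $h(n)$ as defined by Eq.~(\mref{eq:part1}) for $n>0$; for $n=0$ only the $k=0$ term survives and equals $g(0)=1=h(0)$. This proves $E_h(x) = E_g(E_f(x))$.

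The only real obstacle is bookkeeping: one must carefully distinguish ordered weak partitions (blocks may be empty) produced by the raw Product Formula from ordered strict partitions (nonempty blocks) and finally from the unordered set partitions indexing $\Pi(S)$. The hypothesis $f\colon \PP \to \RR$ eliminates empty-block contributions automatically, and the normalization $g(0)=1$ matches the base case $h(0)=1$, so these combinatorial identifications require no extra estimates.
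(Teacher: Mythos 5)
Your proof is correct: it is the classical Stanley argument, and all the bookkeeping you flag (empty blocks killed by $f(0)=0$, the $k!$ passage from ordered to unordered partitions, the $k=0$ term matching $h(0)=g(0)=1$) goes through. Note, however, that the paper does not actually prove Theorem~\mref{thm:comp1} at all --- it quotes it from~\cite[Theorem 5.1.4]{St} and instead proves the Generalized Composition Formula (Theorem~\ref{g-composition}), remarking that it reduces to this statement at $\lambda=0$. The route taken there is genuinely different from yours. Rather than forming $E_f(x)^k/k!$ and dividing by the symmetry factor, the paper defines the $k$-th divided power recursively through the Rota--Baxter and differential operators, $E_{\lambda,f^{[k]}}=P_A\bigl(E_{\lambda,f^{[k-1]}}\diamond d_A(E_{\lambda,f})\bigr)$, and proves by induction (Lemma~\ref{E-f-k}) that its coefficients enumerate \emph{generalized partitions} $\Pi'([n])$, i.e.\ blocks ordered by their distinct maxima. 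At $\lambda=0$ the weight $\lambda^{\#B_1+\cdots+\#B_k-\#(B_1\cup\cdots\cup B_k)}$ kills all overlapping configurations and ordering-by-maxima becomes just a canonical labelling of an ordinary partition, so the two proofs agree. What the paper's route buys is robustness under the twist: for $\lambda\neq 0$ the blocks may overlap, the orbit of an unordered family under permutation no longer has size exactly $k!$, and your division-by-$k!$ step has no clean analogue --- which is precisely why the recursive $P_A$-definition and the distinct-max normalization are introduced. What your route buys is brevity and directness for the classical case, at the cost of not generalizing.
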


Now we give some definitions before we prove the main theorem for the composition rule in $\csha_\lambda(\RR)$.

\begin{definition}\label{generalized-par}
A collection $\{B_1,\cdots,B_k\}$ of $[n]$ is called a {\bf
generalized partition (with distinct max) of $[n]$} if
\begin{enumerate}
\item $B_i \neq \emptyset$, $1\leq i\leq k$;
\item $B_1 \cup B_2 \cup\cdots \cup B_k=[n]$;
\item $\max B_1<\max B_2<\cdots <\max B_k.$
\end{enumerate}
Let $\Pi'([n])$ denote the set of generalized partitions of $[n]$.
Let $\overline{S}(n,k)$ denote the number of the generalized partitions of $[n]$ with $k$ blocks. By convention,
we put
$$\overline{S}(0,0)=1, \overline{S}(n,0)=0 \text{
for } n \geq 1, \text{ and } \overline{S}(n,k)=0 \text{ for }
k>n\geq 1.$$

Let $\overline{B}(n)$ denote the number of the generalized partitions
of $[n]$. That means $\overline{B}(n)=\sum_{k=1}^n\overline{S}(n,k)$.
\end{definition}

For example, $\overline{S}(3,2)=8$. All the generalized partitions of $[3]$ with $2$ blocks are listed below.
\begin{equation*}
\begin{array}{llll}
\{\{1\},\{2,3\}\},&\{\{1\},\{1,2,3\}\},&\{\{1,2\},\{3\}\},&\{\{1,2\},\{2,3\}\},\\
\{\{2\},\{1,3\}\},&\{\{2\},\{1,2,3\}\},&\{\{1,2\},\{1,3\}\},&\{\{1,2\},\{1,2,3\}\}.
\end{array}
\end{equation*}
But $\{\{1,3\},\{2,3\}\}$ and $\{\{1,3\},\{1,2,3\}\}$ are not generalized partitions of $[3]$.

With the extra restriction that $B_i\cap B_j=\emptyset$ for $i \neq j$, $\{B_1, B_2, \cdots, B_k\}$ is a partition of $S$ \cite[p. 33]{St}. Recall that the number of such partitions is $S(n,k)$, the Stirling number of the second kind, and the number of all partitions equals to $B(n)$, the $n$-th Bell number. For this reason, we also call $\overline{S}(n,k)$ and $\overline{B}(n)$ the generalized Stirling numbers of the second kind and the generalized Bell numbers, respectively.

Similar to the recursive formula for Stirling numbers of the second kind, we have the following formula for $\overline{S}(n,k)$.

\begin{proposition}\label{p-recurrence} For $n,k \geq 1$, we have
\begin{equation}\label{recurrence}
\overline{S}(n,k)=\sum_{i=0}^{n-1}{n-1 \choose i}2^i\overline{S}(i,k-1).
\end{equation}
$$\overline{S}(n,1)=1,\qquad \overline{S}(n,n)=2^{n \choose 2}.$$
\end{proposition}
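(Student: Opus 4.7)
The plan is to prove the recursion by a direct combinatorial decomposition, grouping generalized partitions of $[n]$ according to the ``last'' block $B_k$. The key observation, which drives everything, is that because $\max B_1 < \cdots < \max B_k$ and $B_1 \cup \cdots \cup B_k = [n]$, the element $n$ must lie in $B_k$ (and in $B_k$ alone). Indeed, if $n \in B_j$ then $\max B_j = n$, and the strict-inequality condition forces $j=k$.

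With this in hand, I would decompose an arbitrary generalized partition $\{B_1,\dots,B_k\}\in \Pi'([n])$ as follows. Set $S := B_1 \cup \cdots \cup B_{k-1} \subseteq [n-1]$, and write $i = \#S$. Then $\{B_1,\dots,B_{k-1}\}$, via the unique order-preserving bijection $S \to [i]$, corresponds to a generalized partition of $[i]$ with $k-1$ blocks, contributing a factor of $\overline{S}(i,k-1)$. The block $B_k$ must contain $n$, must contain $[n-1]\setminus S$ (since the union has to be $[n]$), and may contain any subset of $S$; so there are exactly $2^{i}$ admissible choices for $B_k$. Conversely, given any $S\subseteq [n-1]$ of size $i$, a generalized partition of $S$ into $k-1$ blocks, and any subset of $S$ to adjoin to $\{n\}\cup([n-1]\setminus S)$, one recovers a valid generalized partition of $[n]$ into $k$ blocks; the constraint $\max B_{k-1} < \max B_k = n$ is automatic since $B_{k-1}\subseteq[n-1]$. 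Summing over the $\binom{n-1}{i}$ choices of $S$ and $i$ from $0$ to $n-1$ gives exactly Eq.~\eqref{recurrence}. The $i=0$ term is harmless: $\overline{S}(0,k-1)=0$ unless $k=1$, in which case it contributes the correct value $1$.

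For the boundary values, $\overline{S}(n,1)=1$ is immediate because a single block whose union is $[n]$ must equal $[n]$. For $\overline{S}(n,n)=2^{\binom{n}{2}}$, note that $k=n$ strictly increasing maxima in $[n]$ force $\max B_i = i$ for $1\le i\le n$; then $B_i$ is determined by a free choice of any subset of $\{1,\dots,i-1\}$ to accompany $i$, giving $2^{i-1}$ options. Multiplying yields $\prod_{i=1}^n 2^{i-1} = 2^{\binom{n}{2}}$, and the union condition is automatic since $i\in B_i$ for each $i$.

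The only nontrivial point is verifying that the decomposition $\{B_1,\dots,B_k\}\leftrightarrow (S,\{B_1,\dots,B_{k-1}\},B_k)$ is genuinely a bijection and that all resulting configurations satisfy the three conditions in Definition~\ref{generalized-par}; however, since $n\in B_k$ is forced and the max-ordering on $B_1,\dots,B_{k-1}$ is preserved under the order-preserving identification $S\cong[i]$, this is a routine check rather than a real obstacle. No additional machinery is required.
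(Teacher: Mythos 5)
Your proof is correct and follows essentially the same route as the paper's: the recurrence is obtained by the identical three-step decomposition (choose $S=B_1\cup\cdots\cup B_{k-1}\subseteq[n-1]$ of size $i$, a generalized partition of $S$ into $k-1$ blocks, and one of $2^i$ admissible choices for $B_k\supseteq\{n\}\cup([n-1]\setminus S)$), and the boundary values are computed by the same observations ($B_1=[n]$ when $k=1$; $\max B_i=i$ forcing $2^{i-1}$ choices per block when $k=n$). Your explicit justification that $n$ must lie in $B_k$ and that the identification $S\cong[i]$ preserves the max-ordering is a welcome extra level of care that the paper leaves implicit.
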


\begin{proof}
We first prove the recurrence.

Every generalized partition of $[n]$ with $k$ blocks is determined uniquely in the following three independent steps:
\begin{enumerate}
\item
choose a subset $X$ of $[n-1]$ with $i$ elements, $0\leq i\leq n-1$;
\item
choose a generalized partition $\{B_1,\cdots,B_{k-1}\}$ of $X$;
\item
choose a subset $B_k$ of $[n]$ that contains $([n-1]\backslash X)\cup \{n\}$ and contains a subset of $X$.
\end{enumerate}
The number of choices of the three steps are ${n-1 \choose i}$,
$\overline{S}(i,k-1)$ and $2^{i}$, respectively, hence the recursion.

Since there is only one way to put $[n]$ in one block, we have
$\overline{S}(n,1)=1$.

If $\{B_1,\cdots,B_n\}$ is a generalized partition of $[n]$ into $n$ blocks, then we must have $\max B_i=i, 1\leq i\leq n$. Then $B_i=\{i\}\cup B'_i$ for a subset $B'_i$ of $[i-1]$ (with the convention that $[0]=\emptyset$. Thus there are $2^{i-1}$ choices for $B_i, 1\leq i\leq n$. Therefore, we have
$$S(n,n)=2^02^1\cdots2^{n-1}=2^{n \choose 2}.$$
It also follows from the recursion.
\end{proof}

\begin{proposition}\label{S'nk} For $n\geq k \geq 1$, we have
\begin{equation*}
\overline{S}(n,k)=2^{k \choose 2}\hspace{-1cm}\sum_{\tiny \begin{array}{c}m_1, m_2, \cdots, m_{n-k}\\
1 \leq m_1<m_2<\cdots<m_{n-k}\leq n-1\end{array}}
\prod_{i=1}^{n-k}(2^{k-m_i+i}-1).
\end{equation*}\end{proposition}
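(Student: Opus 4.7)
The plan is to enumerate generalized partitions $\{B_1,\dots,B_k\}$ of $[n]$ by first choosing their maxima $M_j=\max B_j$ (equivalently, choosing the non-maxima), and then independently choosing, for each element of $[n]$, the set of blocks it belongs to. Because the $B_j$ are arranged by $M_1<M_2<\cdots<M_k$ and must cover $[n]$, the element $n$ is always a maximum (indeed $M_k=n$), so the non-maxima form a subset $\{m_1<m_2<\cdots<m_{n-k}\}\subseteq[n-1]$. Summing over such subsets replaces the sum over $\Pi'([n])$.

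Next, I count the membership choices separately for maxima and non-maxima. For a maximum $M_j$, the element $M_j$ must lie in $B_j$, and it may optionally lie in any $B_{j'}$ with $j'>j$ (since then $M_{j'}>M_j$, so $M_j$ is legitimately not the maximum of $B_{j'}$); it cannot lie in any $B_{j'}$ with $j'<j$. This gives $2^{k-j}$ independent choices for $M_j$, and taking the product
\[
\prod_{j=1}^{k}2^{k-j}=2^{\sum_{j=1}^{k}(k-j)}=2^{\binom{k}{2}}
\]
produces the prefactor in the proposition.

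For the non-maximum $m_i$, the element must belong to at least one block, and only to those $B_j$ with $M_j>m_i$ (since $m_i$ is not a maximum of any block). The key counting step is to determine how many $M_j$ exceed $m_i$: among the $m_i$ elements of $\{1,2,\dots,m_i\}$, exactly $i$ are non-maxima (namely $m_1,\dots,m_i$), so $m_i-i$ are maxima, leaving $k-(m_i-i)=k-m_i+i$ maxima greater than $m_i$. Hence $m_i$ contributes $2^{k-m_i+i}-1$ choices (any nonempty subset of the admissible blocks). Multiplying across $i$ yields $\prod_{i=1}^{n-k}(2^{k-m_i+i}-1)$, and combining with the prefactor $2^{\binom{k}{2}}$ and summing over admissible $(m_1,\dots,m_{n-k})$ gives the stated formula.

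The step I expect to require the most care is the bookkeeping that transforms ``number of maxima exceeding $m_i$'' into the clean expression $k-m_i+i$; once this identity is established, the three choices (maxima set, memberships of maxima, memberships of non-maxima) are manifestly independent and the product decomposition is immediate. As sanity checks, the formula recovers $\overline{S}(n,1)=1$ (unique non-max sequence $m_i=i$ gives each factor $2^1-1=1$) and $\overline{S}(n,n)=2^{\binom{n}{2}}$ (empty product over non-maxima), in agreement with Proposition~\ref{p-recurrence}, and in the case $n=3,k=2$ it returns $2\cdot((2^2-1)+(2^1-1))=8$ as computed.
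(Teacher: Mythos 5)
Your proof is correct and follows essentially the same three-step decomposition as the paper's: choose the set of non-maxima $\{m_1<\cdots<m_{n-k}\}\subseteq[n-1]$, assign the maxima to blocks (producing the factor $2^{\binom{k}{2}}$), and put each non-maximum $m_i$ into a nonempty subset of the $k-m_i+i$ admissible blocks. The only cosmetic difference is in the middle step, where you count element-by-element (which later blocks contain each maximum, giving $\prod_{j=1}^{k}2^{k-j}$) while the paper counts block-by-block (which smaller maxima lie in each block); these are transposed readings of the same incidence count and yield the same $2^{\binom{k}{2}}$.
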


\begin{proof}
For a generalized partition $\{B_1,\cdots,B_k\}$ of $[n]$ with $k$ blocks. We have $\max B_1<\cdots <\max B_k$. For the convenience of the proof, we use the notation $B'_{\max B_i}:=B_i, 1\leq i\leq k$. In order words, $B'_j$ denotes the block whose largest element is $j$. Such $j$'s form a subset $N$ of $[n]$ with $k$ elements. Denote
$$ [n]\backslash N = \{1\leq m_1<m_2<\cdots<m_{n-k}\leq n-1\}.$$

With these notations, there are three steps in constructing a generalized partition of $[n]$ with $k$ blocks.

\begin{enumerate}
 \item[Step 1.] Choose a subset $N$ of $[n]$ of cardinality $k$ to be the largest elements of the blocks. Equivalently, choose a subset $\{1\leq m_1<\cdots,m_{n-k}\leq n-1\}$ to be the complement of $N$.
\item[Step 2.] Determine the contribution of $N$ in the generalized partition with $k$ blocks.
\item[Step 3.] Determine the contribution of  $[n]\backslash N$ in the generalized partition with $k$ blocks.
\end{enumerate}

We next derive formulas for Step 2 and Step 3.

\noindent
{\bf Step 2.}
We construct a generalized partition of $N$ with $k$ blocks.
We know that all the $k$ elements in $N$ are the largest elements of the blocks $B'_j, j\in N$. We can also view $N$ as the result after removing the $n-k$ elements of $[n]\backslash N$ from $[n]$. Viewing each elements of $[n]\backslash N$ as a cutting point, the elements of $N\subseteq [n]$ are cut into
$n-k+1$ segments even though some of the segments might be empty. For example, taking $n=9$ and $N=\{2, 3, 7,  8, 9\}$. Then $[9]\backslash N=\{m_1=1,m_2=4,m_3=5,m_4=6\}$, cutting $[9]$ into five groups some of which might be empty:
\begin{eqnarray*}
 &&\{ \}; \\
 &&\{2, 3\};\\
 && \{ \}; \\
 && \{ \};\\
 && \{7, 8, 9\}.
 \end{eqnarray*}

By collecting the segments together this way, we can put the $N$ elements and their corresponding $B'_j$\,s into groups as follows.

\begin{equation*}
\begin{array}{llll}
B'_1,& B'_2, &\cdots, &B'_{m_1-1};\\
B'_{m_1+1},&B'_{m_1+2},&\cdots,&B'_{m_2-1};\\
B'_{m_2+1},&B'_{m_2+2},&\cdots,&B'_{m_3-1};\\
&&\vdots&\\
B'_{m_{n-k-1}+1}, & B'_{m_{n-k-1}+1},&\cdots,&B'_{m_{n-k}-1};\\
B'_{m_{n-k}+1}, &B'_{m_{n-k}+2}, &\cdots, &B'_{n}.
\end{array}
\end{equation*}
We note that $B'_{m_1}, B'_{m_2}, \cdots, B'_{m_{n-k}}$ are not defined and don't appear among the above groups. Consider $B'_j$ with $1\leq j \leq m_1-1$, namely the $B'_j$ in the first line. Since $B'_j$ can contain $j$ and any subset of the $j-1$ elements in $N$ that are less than $j$, there are $2^{j-1}$ choices for $B'_j$. Therefore, the total choices of all the blocks in the first
line are $2^02^1\cdots 2^{m_1-2}$. Next consider $B'_j$ with $m_1+1\leq j \leq m_2-1$. Then $B'_j$ contains $j$ and any subset of the elements of $N$ that are less than $j$. There are $j-2$ such elements $\{1,\cdots, m_1-1, m_1+1,\cdots,j-1\}$ since $m_1$ is not in $N$. Thus there are $2^{j-2}$ choices for $B'_j$. In general, each $B'_j$ on the $i$-th line has $j-i$ choices, where $1\leq i\leq n-k+1$. Therefore, the number of
generalized partitions of $N$ with $k$ blocks is.

\begin{align*}
\overline{S}(N,k)&=(2^02^1\cdots 2^{m_1-2})\times (2^{m_1-1}2^{m_1}\cdots2^{m_2-3})\times \cdots \times (2^{m_{n-k}-n+k}2^{m_{n-k}-n+k+1}\cdots 2^{k-1})\\
&=2^{k \choose 2}.
\end{align*}

\noindent
{\bf Step 3:}
Let a generalized partition of $N$ with $k$ blocks be given. We next determine the number of ways to put the elements $m_1,m_2,\cdots,m_{n-k}$ into the blocks to form a generalized partition of $[n]$ with $k$ blocks.

By the definition of $B'_j$, we see that $m_i, 1\leq i\leq n-k$ can not be put in any $B'_j$ with $j<m_i$ and hence can only be put in the $B'_j$'s after the $i$-th line in the above table. Since the $B'_j$ have union $[n]$ and can have overlaps, every $m_i, 1\leq i \leq n-k$ can appear in any nonzero number of the blocks after the $i$-th line in the above table. Hence there are $2^{k-(m_i-i)}-1$ possible ways to add $m_i$ to the existing blocks. Therefore, the total number of choices of adding $[n]\backslash N$ to any given generalized partition of $N$ with $k$ blocks is
$$(2^{k-(m_1-1)}-1)\times (2^{k-(m_2-2)}-1)\times\cdots\times (2^{k-(m_{n-k}-(n-k))}-1)=\prod_{i=1}^{n-k}(2^{k-m_i+i}-1).$$
\smallskip

Putting the three steps together, we have
\begin{align*}
\overline{S}(n,k)&=\sum_{\tiny \begin{array}{c}m_1, m_2, \cdots, m_{n-k}\\
1 \leq m_1<m_2<\cdots<m_{n-k}\leq n-1\end{array}}2^{k \choose 2}
\prod_{i=1}^{n-k}(2^{k-m_i+i}-1).
\end{align*}
We complete the proof.
\end{proof}

We have the following special cases of Proposition \ref{S'nk}.
\begin{corollary}\label{S'2n-1} For $n \geq 1$, we have
$$\overline{S}(n,2)=3^{n-1}-1, \qquad \overline{S}(n,n-1)=2^{n-1 \choose 2}(2^n-n-1).$$
\end{corollary}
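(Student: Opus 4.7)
The plan is to derive both identities as direct specializations of Proposition~\mref{S'nk}, which states that
$$\overline{S}(n,k)=2^{k\choose 2}\sum_{1\le m_1<\cdots<m_{n-k}\le n-1}\prod_{i=1}^{n-k}(2^{k-m_i+i}-1).$$
In each of the two special values of $k$ the outer sum collapses after a convenient reindexing, and the remaining calculation is elementary.

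For $\overline{S}(n,2)$, I would substitute $a_i:=m_i-i$. This turns the strict inequality $m_{i+1}\ge m_i+1$ into the weak inequality $a_{i+1}\ge a_i$, and converts the bounds $1\le m_i\le n-1$ into $0\le a_i\le 1$. Hence each admissible tuple is a weakly increasing $\{0,1\}$-sequence, which is determined by the number $j$ of $1$'s, $0\le j\le n-2$. In the product, each $a_i=0$ contributes $2^{2}-1=3$ and each $a_i=1$ contributes $2^{1}-1=1$, so the product equals $3^{n-2-j}$. Summing the resulting geometric series gives $\sum_{j=0}^{n-2}3^{n-2-j}=(3^{n-1}-1)/2$, and multiplying by $2^{{2\choose 2}}=2$ yields $\overline{S}(n,2)=3^{n-1}-1$.

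For $\overline{S}(n,n-1)$, we have $n-k=1$, so the outer sum is over a single index $m_1\in\{1,\dots,n-1\}$. Proposition~\mref{S'nk} reduces to
$$\overline{S}(n,n-1)=2^{{n-1}\choose 2}\sum_{m_1=1}^{n-1}\bigl(2^{n-m_1}-1\bigr).$$
Reindexing by $s=n-m_1$ gives $\sum_{s=1}^{n-1}(2^s-1)=(2^n-2)-(n-1)=2^n-n-1$, hence $\overline{S}(n,n-1)=2^{{n-1}\choose 2}(2^n-n-1)$.

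The only step of substance is the substitution $a_i=m_i-i$ in the $k=2$ case, which is what cleanly parametrizes the admissible tuples by a single integer $j$; the $k=n-1$ case is a one-line computation. As a sanity check, both formulas agree with $\overline{S}(3,2)=8$, the value listed after Definition~\mref{generalized-par}.
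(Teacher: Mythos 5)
Your proof is correct, and the second identity is handled exactly as in the paper: set $k=n-1$ in Proposition~\ref{S'nk} so that the sum runs over the single index $m_1$, and evaluate $\sum_{m_1=1}^{n-1}(2^{n-m_1}-1)=2^n-n-1$. For the first identity, however, you take a different route. The paper gives a direct combinatorial count: it classifies generalized partitions $\{B_1,B_2\}$ by $t=\max B_1$ and $j=\#B_1$, counts $\binom{t-1}{j-1}$ choices of $B_1$ and $2^j$ choices of $B_2$, and sums $\sum_{j}\binom{t-1}{j-1}2^j=2\cdot 3^{t-1}$ over $t$ to get $3^{n-1}-1$; it only remarks in passing that the identity could instead be extracted from Proposition~\ref{p-recurrence} or Proposition~\ref{S'nk}. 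You carry out that alternative derivation, and the substitution $a_i=m_i-i$ is the right device: it turns the strictly increasing tuples $1\le m_1<\cdots<m_{n-2}\le n-1$ into weakly increasing $\{0,1\}$-sequences parametrized by the number of $1$'s, collapsing the multi-index sum to a geometric series $\sum_{j=0}^{n-2}3^{n-2-j}=(3^{n-1}-1)/2$, which after the factor $2^{\binom{2}{2}}=2$ gives the claim. What your approach buys is uniformity (both identities fall out of the same closed formula) at the cost of leaning entirely on the correctness of Proposition~\ref{S'nk}; the paper's direct count for $k=2$ is self-contained and serves as an independent check of that proposition. One trivial caveat: Proposition~\ref{S'nk} is stated for $n\ge k\ge 1$, so the degenerate cases $n=1$ (where $\overline{S}(1,2)=0=3^0-1$ and $\overline{S}(1,0)=0$ hold by convention) should be noted separately, as they are not covered by the formula you specialize.
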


\begin{proof}
We give a direct proof of the first equation it can also be obtained from Proposition
\ref{p-recurrence} and Proposition \ref{S'nk} by setting $k=2$.

Let $\{B_1,B_2\}$ be a generalized partitions of $[n]$ with $2$ blocks. Then $1\leq \max B_1\leq n-1$ and $\max B_2=n$. If $\max B_1=t, 1\leq t\leq n-1$ and $\# B_1=j, 1\leq j\leq t$, then there are ${t-1 \choose j-1}$ choices of $B_1$. For each such a choice of $B_1$, there are $2^j$ choices of $B_2$ since $B_2$ must contain $\{t+1,\cdots,n\}\cup ([t]\backslash B_1)$ but can contain any subset of $B_1$. Thus for given $1\leq t\leq n-1$, there are
$\sum_{j=1}^t {t-1 \choose j-1} 2^j=2 (3^{t-1})$ choices. Summing over $t$, we have
$$\overline{S}(n,2)=\sum_{t=1}^{n-1}2\cdot 3^{t-1}=3^{n-1}-1.$$

For $\overline{S}(n,n-1)$, setting $k=n-1$ in Proposition \ref{S'nk}, we have
\begin{align*}
\overline{S}(n,n-1)&=2^{n-1 \choose 2}\sum_{m_1=1}^{n-1}(2^{n-1-m_1+1}-1)\\
&=2^{n-1 \choose 2}\sum_{m_1=1}^{n-1}(2^{n-m_1}-1)\\
&=2^{n-1 \choose 2}(2^n-n-1).
\end{align*}

\end{proof}

To obtain the generalization of the Composition Formula, we first need to give a proper definition for the composition of two elements in $\csha_\lambda(\RR)$.
We do this by a suitable generalization of the construction in~\mcite{K-P} which treated the case of $\lambda=0$.

Set
\begin{equation*}
E_{\lambda,f}=\sum_{n=0}^{\infty}f(n)\bfone_n.
\end{equation*}
For the differential operator $d_A$, we have $d_A(\bfone_n)=\bfone_{n-1}$, and $d_A(1)=0$. Then, we have
\begin{equation*}
d_A(E_{\lambda,f})=\sum_{n=0}^{\infty}f(n+1)\bfone_n.
\end{equation*}
Next, for the Rota-Baxter operator $P_A$, we have $P_A(\bfone_n)=\bfone_{n+1}$. Then we have
\begin{equation*}
P_A(E_{\lambda,f})=\sum_{n=1}^{\infty}f(n-1)\bfone_n.
\end{equation*}
Therefore, we have
\begin{equation*}
d_A(P_A(E_{\lambda,f}))=E_{\lambda,f}.
\end{equation*}

\begin{definition}
We define the {\bf $n$-th divided power} $E_{\lambda,f^{[n]}}$ of $E_{\lambda,f}$ recursively by $E_{\lambda,f^{[0]}}=1$ and
$$E_{\lambda, f^{[n]}}=P_A\left(E_{\lambda, f^{[n-1]}}\diamond d_A(E_{\lambda, f})\right), \quad n\geq 1.$$
\end{definition}
Thus for any $n \geq 1$, we have
$d_A(E_{\lambda, f^{[n]}})=E_{\lambda, f^{[n-1]}}\diamond d_A(E_{\lambda, f})$.

\begin{definition}\label{composition}
Given functions $f: \mathbb{P} \rightarrow \mathbb{R}$ and $g:\mathbb{N} \rightarrow \mathbb{R}$, we define the {\bf composition} of $E_{\lambda,g}$ and $E_{\lambda,f}$ by
$E_{\lambda,g}(E_{\lambda,f})=\sum_{k=0}^{\infty}g(k)E_{\lambda,f^{[k]}}$.
\end{definition}
First, we give a formula related to $E_{\lambda,f^{[k]}}$.
\begin{lemma}\label{E-f-k}
Given a function $f:\mathbb{P} \rightarrow \mathbb{R}$, for $k \geq 1$, we have
\begin{equation}\label{Ef-k}
E_{\lambda,f^{[k]}}=\sum_{n=k}^{\infty}\sum_{\{B_1,\cdots,B_k\}\in \Pi'([n])}
\lambda^{\#B_1+\cdots+\#B_k-\#(B_1\cup \cdots \cup B_k)}f(\#B_1)\cdots f(\#B_k)\bfone_n,
\end{equation}
where $\{B_1, B_2\cdots,B_k\}$ ranges over all the generalized partitions of $[n]$.
\end{lemma}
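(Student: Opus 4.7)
The natural approach is induction on $k$, driven by the recursive definition $E_{\lambda, f^{[k]}} = P_A\bigl(E_{\lambda, f^{[k-1]}} \diamond d_A(E_{\lambda, f})\bigr)$. For the base case $k=1$, since $E_{\lambda,f^{[0]}}=1$ one computes $E_{\lambda,f^{[1]}} = P_A(d_A(E_{\lambda,f}))=\sum_{n\geq 1}f(n)\bfone_n$ on the left, while on the right the only generalized partition of $[n]$ with one block is $\{[n]\}$, contributing $\lambda^0 f(n)\bfone_n$; so both sides agree.

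For the inductive step, write $d_A(E_{\lambda,f})=E_{\lambda,G}$ with $G(m)=f(m+1)$, and let $F_{k-1}(m)$ denote the coefficient of $\bfone_m$ in $E_{\lambda,f^{[k-1]}}$, which by induction is the sum over $\Pi'([m])$ appearing in \eqref{Ef-k}. Proposition~\mref{pp:exp1} gives $E_{\lambda,F_{k-1}}\diamond E_{\lambda,G}=\sum_{u\geq 0}H(u)\bfone_u$ with
$$H(u)=\sum_{\substack{(S,T)\in\calp([u])^2\\ S\cup T=[u]}}\lambda^{\#(S\cap T)}F_{k-1}(\#S)G(\#T),$$
and since $P_A$ shifts the index, $E_{\lambda,f^{[k]}}=\sum_{n\geq 1}H(n-1)\bfone_n$. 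Expanding $F_{k-1}(\#S)$ via the induction hypothesis (using the canonical order-preserving bijection $S\cong[\#S]$ to interpret $\Pi'(S)$) and substituting $G(\#T)=f(\#T+1)$ rewrites the coefficient of $\bfone_n$ as a weighted sum over triples $(S,T,\{B_1,\ldots,B_{k-1}\})$ with $S\cup T=[n-1]$ and $\{B_1,\ldots,B_{k-1}\}\in\Pi'(S)$.

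The crux is a bijection between such triples and generalized partitions $\{B_1,\ldots,B_k\}\in\Pi'([n])$, given by $B_k:=T\cup\{n\}$. Distinctness of maxima forces $\max B_k=n$, so $n$ lies in no other block; the inverse sends $\{B_1,\ldots,B_k\}$ to $(B_1\cup\cdots\cup B_{k-1},\,B_k\setminus\{n\},\,\{B_1,\ldots,B_{k-1}\})$. Under this bijection $\#B_k=\#T+1$, so the $f$-factors match, and the $\lambda$-exponents match because
$$\#(S\cap T)+\sum_{i=1}^{k-1}\#B_i-\#S \;=\; \#T+\sum_{i=1}^{k-1}\#B_i-(n-1)\;=\;\sum_{i=1}^k\#B_i-n,$$
using $\#(S\cap T)=\#S+\#T-(n-1)$. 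Thus the two weighted sums are identical and the induction closes; the condition $\#S\geq k-1$ inside $F_{k-1}$ automatically forces the outer index to start at $n=k$.

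The main obstacle is bookkeeping the $\lambda$-exponent: the factor $\#(S\cap T)$ produced by the product formula must be reconciled with the exponent $\sum\#B_i-\#(\cup B_i)$ encoded in the definition of a generalized partition. The identity above resolves this cleanly via elementary inclusion--exclusion, but the verification needs to be performed carefully to avoid off-by-one errors when reinserting the element $n$ into the partition and when distinguishing $\#(S\cup T)=n-1$ from $\#(\cup_{i=1}^k B_i)=n$.
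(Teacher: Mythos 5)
Your proof is correct and follows essentially the same route as the paper's: induction on $k$ via the recursion $E_{\lambda,f^{[k]}}=P_A(E_{\lambda,f^{[k-1]}}\diamond d_A(E_{\lambda,f}))$, the product formula of Proposition~\mref{pp:exp1} to expand the coefficient of $\bfone_{n-1}$, and the bijection $B_k:=T\cup\{n\}$ with the same $\lambda$-exponent bookkeeping. The only (cosmetic) difference is that you invoke Proposition~\mref{pp:exp1} as a black box where the paper re-expands the mixable shuffle product and reindexes the multinomial sum inline.
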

\begin{proof}
When $k=1$, according to the definition of $E_{\lambda,f^{[k]}}$, we have
\begin{equation*}
E_{\lambda,f^{[1]}} = P_A\left(E_{\lambda,f^{[0]}} \diamond d_A(E_{\lambda,f})\right)= P_A\left(1 \diamond \sum_{n=1}^{\infty}f(n)\bfone_{n-1}\right)
= P_A\left(\sum_{n=1}^{\infty}f(n)\bfone_{n-1}\right)= \sum_{n=1}^{\infty}f(n)\bfone_n.
\end{equation*}
Setting $k=1$ in the right hand side of Eq.~\eqref{Ef-k}, we have
\begin{equation*}
\sum_{n=1}^{\infty}\sum_{\{B_1\}\in \Pi'([n])}\lambda^{\#B_1-\#B_1}f(\#B_1)\bfone_n=\sum_{n=1}^{\infty}f(n)\bfone_n.
\end{equation*}
Therefore, Eq.~\eqref{Ef-k} holds for $k=1$.
Assume that Eq.~\eqref{Ef-k} holds for $k$. Then for $k+1$, we have
{\small \begin{align}
E_{\lambda,f^{[k+1]}} &= P_A\left(E_{\lambda,f^{[k]}} \diamond d_A(E_{\lambda,f})\right) \nonumber\\
&=P_A\left(\sum_{n=k}^{\infty}\sum_{\{B_1,\cdots,B_k\} \in \Pi'([n])}
\lambda^{\#B_1+\cdots+\#B_k-\#(B_1\cup \cdots \cup B_k)}f(\#B_1)\cdots f(\#B_k)\bfone_n \right.  \nonumber\\[-10pt]
&\qquad\qquad\qquad\qquad\qquad\qquad\qquad\qquad\qquad\left. \diamond\ \left(\sum_{m=1}^{\infty}f(m)\bfone_{m-1}\right)\right) \nonumber\\
&=P_A\left(\sum_{n=0}^{\infty}\sum_{m=1}^{\infty}\sum_{\{B_1,\cdots,B_k\}\in \Pi'([n])}\lambda^{\#B_1+\cdots+\#B_k-\#(B_1\cup \cdots \cup B_k)}f(\#B_1)\cdots f(\#B_k)f(m)\bfone_n\diamond \bfone_{m-1}\right) \nonumber\\
&=P_A\left(\sum_{n=0}^{\infty}\sum_{m=1}^{\infty}\sum_{\{B_1,\cdots,B_k\}\in \Pi'([n])}\lambda^{\#B_1+\cdots+\#B_k-\#(B_1\cup \cdots \cup B_k)}f(\#B_1)\cdots f(\#B_k)f(m)\right. \nonumber\\
&\qquad\qquad\qquad\qquad\qquad\qquad\qquad \left.\cdot\sum_{i=0}^{\min(n,m-1)}\lambda^i{m+n-i-1 \choose n}{n \choose i}\bfone_{m+n-i-1}\right) \nonumber\\
&=P_A\left(\sum_{u=0}^{\infty}\sum_{\tiny\begin{array}{c}(u_1,u_2,u_3)\in \NN^3\\ u_1+u_2+u_3=u \end{array}} \lambda^{u_1}\binc{u}{u_1,u_2,u_3}\right. \nonumber\\
&\ \left.\cdot \Big(\sum_{\{B_1,\cdots,B_k\}\in \Pi'([u_1+u_2])}
\lambda^{\#B_1+\cdots+\#B_k-\#(B_1\cup \cdots \cup B_k)}f(\#B_1)\cdots f(\#B_k)f(u_1+u_3+1)\Big)\bfone_u\right) \nonumber\\
&=\sum_{u=1}^{\infty}\sum_{\tiny\begin{array}{c}(u_1,u_2,u_3)\in \NN^3\\ u_1+u_2+u_3=u-1 \end{array}} \lambda^{u_1}\binc{u-1}{u_1,u_2,u_3} \nonumber\\
&\ \cdot \Big(\sum_{\{B_1,\cdots,B_k\}\in \Pi'([u_1+u_2])}
\lambda^{\#B_1+\cdots+\#B_k-\#(B_1\cup \cdots \cup B_k)}f(\#B_1)\cdots f(\#B_k)f(u_1+u_3+1) \Big) \bfone_u.\nonumber
\end{align}}
As shown at the end of the proof of Proposition~\mref{pp:exp1}, we have
$$
\sum_{\tiny\begin{array}{c}(u_1,u_2,u_3)\in \NN^3\\ u_1+u_2+u_3=u-1 \end{array}} \lambda^{u_1}\binc{u-1}{u_1,u_2,u_3}=
\sum_{\tiny \begin{array}{c}\{B,B'_{k+1}\} \in \calp([u-1])^2\\B \cup B'_{k+1}=[u-1]\end{array}}\lambda^{\#B+\#B'_{k+1}-\#(B\cup B'_{k+1})}.$$
Thus we get
\begin{align*}
E_{\lambda,f^{[k+1]}}&=\sum_{u=1}^{\infty}\sum_{\tiny \begin{array}{c}\{B,B'_{k+1}\} \in \calp([u-1])^2\\B \cup B'_{k+1}=[u-1]\end{array}}\lambda^{\#B+\#B'_{k+1}-\#(B\cup B'_{k+1})}\\
&\ \cdot \Big(\sum_{\{B_1,\cdots,B_k\}\in \Pi'(B)}
\lambda^{\#B_1+\cdots+\#B_k-\#(B_1\cup \cdots \cup B_k)}f(\#B_1)\cdots f(\#B_k)f(\#B'_{k+1}+1)\Big) \bfone_u.
\end{align*}
Denote $B_{k+1}:=B'_{k+1}\cup\{u\}$. Then $\{B,B_{k+1}\}$ forms a generalized partition of the set $[u]$. Further, if $\{B,B_{k+1}\}$ is a generalized partition of $[u]$ and $\{B_1,\cdots,B_k\}$ is
a generalized partition of $B$, then $\{B_1,\cdots,B_{k+1}\}$ is a generalized partition of $[u]$.
Therefore we have
\begin{align*}
E_{\lambda,f^{[k+1]}}
&=\sum_{u=1}^{\infty}\sum_{\{B,B_{k+1}\}\in \Pi'([u])}\sum_{\{B_1,\cdots,B_k\}\in \Pi'(B)}\lambda^{\#B+\#B_{k+1}-1-(\#(B\cup B_{k+1})-1)+\#B_1+\cdots+\#B_k-\#B}  \label{EF-k-2}\\
&\qquad\qquad\qquad\qquad\qquad\qquad\qquad\qquad\qquad \cdot f(\#B_1)\cdots f(\#B_k)f(\#B_{k+1})\bfone_u \nonumber\\
&=\sum_{u=1}^{\infty}\sum_{\{B_1,\cdots,B_{k+1}\}\in \Pi'([u])}
\lambda^{\#B_1+\cdots+\#B_{k+1}-\#(B_1\cup \cdots \cup B_{k+1})}f(\#B_1)\cdots f(\#B_{k+1})\bfone_u \nonumber\\
&=\sum_{u=k+1}^{\infty}\sum_{\{B_1,\cdots,B_{k+1}\}\in \Pi'([u])}
\lambda^{\#B_1+\cdots+\#B_{k+1}-\#(B_1\cup \cdots \cup B_{k+1})}f(\#B_1)\cdots f(\#B_{k+1})\bfone_u. \nonumber
\end{align*}
This completes the induction.
\end{proof}

\begin{theorem}\label{g-composition}
$(${\rm Generalized Composition Formula}$)$ Given functions $f: \mathbb{P} \rightarrow \mathbb{R}$ and $g:\mathbb{N} \rightarrow \mathbb{R}$ with $g(0)=1$,
define a new function $h: \mathbb{N} \rightarrow \mathbb{R}$ by
\begin{align*}
h(n)&=\sum_{\tiny{\begin{array}{c}\{B_1,\cdots,B_k\}\in \Pi'([n])\\ k\geq 1\end{array}}}
\lambda^{\#B_1+\cdots+\#B_k-\#(B_1\cup \cdots \cup B_k)}f(\#B_1)\cdots f(\#B_k)g(k),\quad n>0,\\
h(0)&=1,
\end{align*}
where the sum ranges over all generalized partitions $\{B_1,\cdots,B_k\}$ of $[n]$. Then
\begin{equation*}
E_{\lambda,h}=E_{\lambda,g}(E_{\lambda,f}).
\end{equation*}
$($Here $E_{\lambda,f}=\sum_{n=1}^{\infty}f(n)\bfone_n$, since $f$ is only defined on positive integers.$)$
\end{theorem}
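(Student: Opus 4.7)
The plan is to derive the theorem directly from the definition of composition (Definition~\ref{composition}) together with the explicit formula for the divided powers $E_{\lambda,f^{[k]}}$ provided by Lemma~\ref{E-f-k}. Once those two ingredients are available, the proof is essentially a bookkeeping exercise: expand, interchange the order of summation, and collect coefficients of $\bfone_n$.

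First, I would write
\begin{equation*}
E_{\lambda,g}(E_{\lambda,f})=\sum_{k=0}^{\infty} g(k)\,E_{\lambda,f^{[k]}}
\end{equation*}
and treat the $k=0$ contribution separately: since $E_{\lambda,f^{[0]}}=1=\bfone_0$ and $g(0)=1$, this term contributes $\bfone_0$ to the sum, matching the prescribed value $h(0)=1$ in the statement.

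Next, for $k\geq 1$ I would substitute the formula from Lemma~\ref{E-f-k}:
\begin{equation*}
E_{\lambda,f^{[k]}}=\sum_{n=k}^{\infty}\sum_{\{B_1,\cdots,B_k\}\in\Pi'([n])}\lambda^{\#B_1+\cdots+\#B_k-\#(B_1\cup\cdots\cup B_k)} f(\#B_1)\cdots f(\#B_k)\,\bfone_n.
\end{equation*}
Interchanging the sums over $k$ and $n$ (using that $\Pi'([n])$ is empty when $k>n$, so the double sum may be written as $\sum_{n\geq 1}\sum_{k=1}^{n}$) collects all coefficients of $\bfone_n$ for each fixed $n\geq 1$. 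The coefficient of $\bfone_n$ becomes
\begin{equation*}
\sum_{k=1}^{n}\sum_{\{B_1,\cdots,B_k\}\in\Pi'([n])}\lambda^{\#B_1+\cdots+\#B_k-\#(B_1\cup\cdots\cup B_k)}f(\#B_1)\cdots f(\#B_k)\,g(k),
\end{equation*}
which is exactly $h(n)$ as defined in the statement. Combining with the $k=0$ contribution yields $\sum_{n\geq 0}h(n)\bfone_n = E_{\lambda,h}$, as desired.

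There is no substantive obstacle here once Lemma~\ref{E-f-k} is in hand; the only technical point worth checking carefully is the legitimacy of interchanging the two infinite sums, which is justified by working in the complete filtered algebra $\csha_\lambda(\RR)$ where the coefficient of each $\bfone_n$ involves only finitely many terms (coming from $k\leq n$). In this sense, the real work of the Generalized Composition Formula is carried by the inductive computation of $E_{\lambda,f^{[k]}}$ in Lemma~\ref{E-f-k}, and the theorem is its natural corollary.
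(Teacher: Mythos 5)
Your proposal is correct and follows essentially the same route as the paper's own proof: separate the $k=0$ term using $g(0)=1$, substitute the formula of Lemma~\ref{E-f-k} for $k\geq 1$, and interchange the sums over $k$ and $n$ to read off the coefficient of $\bfone_n$ as $h(n)$. Your added remark that the interchange is harmless because only $k\leq n$ contributes to the coefficient of each $\bfone_n$ is a sensible justification that the paper leaves implicit.
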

Notice that Theorem \ref{g-composition} reduces to Theorem \ref{thm:comp1} when $\lambda=0$.

\begin{proof} By Definition~\ref{composition} and Lemma~\ref{E-f-k}, we have
\begin{align*}
 &E_{\lambda,g}(E_{\lambda,f})\\
&= \sum_{k=0}^{\infty}g(k)E_{\lambda,f^{[k]}}\\
&= g(0)E_{\lambda,f^{[0]}}+\sum_{k=1}^{\infty}g(k)E_{\lambda,f^{[k]}}\\
&=1+\sum_{k=1}^{\infty}g(k)\sum_{n=k}^{\infty}\sum_{\{B_1,\cdots,B_k\}\in \Pi'([n])}
\lambda^{\#B_1+\cdots+\#B_k-\#(B_1\cup \cdots \cup B_k)}f(\#B_1)\cdots f(\#B_k)\bfone_n\\
&=1+\sum_{n=1}^{\infty}\sum_{k=1}^n \sum_{\{B_1,\cdots,B_k\}\in \Pi'([n])}
\lambda^{\#B_1+\cdots+\#B_k-\#(B_1\cup \cdots \cup B_k)}f(\#B_1)\cdots f(\#B_k)g(k)\bfone_n,
\end{align*}
as needed.
\end{proof}

As applications of Theorem \ref{g-composition}, we give the following corollaries, providing further justification for the notations $\overline{S}(n,k)$ and $\overline{B}(n)$. Recall that $E_{\lambda,I}=\sum_{n=0}^{\infty}\bfone_n$.

\begin{corollary} Let $k \geq 0$,  $E_{\lambda,f}=E_{\lambda,I}-1$ and $E_{\lambda,g}=\bfone_k$. Then we have
\begin{equation*}
E_{\lambda,g}(E_{\lambda,f})=\left\{\begin{array}{cc}\displaystyle\sum_{n=0}^{\infty}S(n,k)\bfone_n, & \lambda=0,\\[20pt]
\displaystyle\sum_{n=0}^{\infty}\overline{S}(n,k)\bfone_n, & \lambda=1,\end{array}\right.
\end{equation*}
Especially, when $\lambda=1$ and $k=2$, we have
\begin{equation}\label{S2}
E_{1,g}(E_{1,f})=1+\sum_{n=1}^{\infty}(3^{n-1}-1)\bfone_n.
\end{equation}
When $\lambda =1$ and $k=n-1$, we have
\begin{equation}\label{Sn-1}
E_{1,g}(E_{1,f})=1+\sum_{n=1}^{\infty}2^{n-1 \choose 2}(2^n-n-1)\bfone_n.\\
\end{equation}
\mlabel{co:1k}
\end{corollary}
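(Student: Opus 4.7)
The plan is to compute $E_{\lambda,g}(E_{\lambda,f})$ directly from Definition \mref{composition} combined with Lemma \mref{E-f-k}, rather than invoking the Generalized Composition Formula (Theorem \mref{g-composition}), because for $k \geq 1$ the relevant $g$ satisfies $g(0) = 0$ rather than $g(0) = 1$. Writing $g(j) = \delta_{j,k}$ and $f(m) = 1$ for all $m \geq 1$ (forced by $E_{\lambda,f} = E_{\lambda,I} - 1 = \sum_{m \geq 1} \bfone_m$), Definition \mref{composition} immediately collapses the defining sum to
\[
E_{\lambda,g}(E_{\lambda,f}) = \sum_{j \geq 0} g(j)\, E_{\lambda, f^{[j]}} = E_{\lambda, f^{[k]}}.
\]
The boundary case $k = 0$ is trivial, since $E_{\lambda, f^{[0]}} = 1 = \bfone_0$ matches the right-hand side under the conventions $S(0,0) = \overline{S}(0,0) = 1$ and $S(n,0) = \overline{S}(n,0) = 0$ for $n \geq 1$.

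For $k \geq 1$, the next step is to substitute $f \equiv 1$ into Lemma \mref{E-f-k}. Since by definition the blocks of any generalized partition in $\Pi'([n])$ cover $[n]$, we have $\#(B_1 \cup \cdots \cup B_k) = n$, and the lemma reduces to
\[
E_{\lambda, f^{[k]}} = \sum_{n=k}^{\infty} \left( \sum_{\{B_1,\ldots,B_k\} \in \Pi'([n])} \lambda^{\sum_{i=1}^{k} \#B_i - n} \right) \bfone_n,
\]
in which the exponent $\sum_i \#B_i - n$ is precisely the total amount of pairwise overlap among the blocks.

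From here the two cases separate cleanly. For $\lambda = 0$, only contributions with $\sum_i \#B_i = n$ survive, and this equality forces the $B_i$ to be pairwise disjoint; combined with the constraint $\max B_1 < \cdots < \max B_k$ built into $\Pi'([n])$, each such configuration is nothing but an ordinary set partition of $[n]$ into $k$ nonempty blocks, labeled in the unique order of increasing maxima, so the inner sum equals the Stirling number $S(n,k)$. For $\lambda = 1$, the $\lambda$-weight is identically $1$, so the inner sum is simply the cardinality of the $k$-block part of $\Pi'([n])$, which is $\overline{S}(n,k)$ by Definition \mref{generalized-par}. Since $S(n,k) = \overline{S}(n,k) = 0$ for $1 \leq n < k$, the outer sum may be extended down to $n = 0$ without changing its value, yielding both stated formulas.

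The displayed special cases then follow by substituting the closed forms from Corollary \mref{S'2n-1}: using $\overline{S}(n,2) = 3^{n-1} - 1$ one reads off the case $\lambda = 1, k = 2$, and using $\overline{S}(n,n-1) = 2^{\binom{n-1}{2}}(2^n - n - 1)$ one reads off the case $\lambda = 1, k = n-1$. The only real subtlety, rather than a genuine obstacle, lies in the $\lambda = 0$ step: one must verify that the pairwise-disjoint, $[n]$-covering, max-ordered families in $\Pi'([n])$ correspond canonically to ordinary set partitions of $[n]$ with $k$ blocks, which amounts to the observation that every set partition has a unique total ordering of its blocks by largest element.
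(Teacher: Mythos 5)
Your proof is correct and follows essentially the same route as the paper's: both reduce the claim to the composition machinery (the sum $\sum_j g(j)E_{\lambda,f^{[j]}}$ collapsing to $E_{\lambda,f^{[k]}}$, evaluated via the generalized-partition expansion) and then read off the special cases from Corollary~\ref{S'2n-1}. The one difference is a point in your favor: the paper cites Theorem~\ref{g-composition} directly, whose hypothesis $g(0)=1$ fails here for $k\geq 1$ (since $g(j)=\delta_{j,k}$), whereas you correctly bypass this by working from Definition~\ref{composition} and Lemma~\ref{E-f-k}, which is exactly the computation underlying that theorem and is all that is actually needed.
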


\begin{proof}
The first equation follows from Theorem~\mref{g-composition} since, for our choices, $$ f(0)=0,  f(i)=1, i\geq 1 \text{ and } g(i)=\delta_{k,i}, i\geq 0.$$

We then get Eqs.~\eqref{S2} and \eqref{Sn-1} by Corollary \ref{S'2n-1}.
\end{proof}

By a similar argument, we obtain
\begin{corollary} Let $E_{\lambda,f}=E_{\lambda,I}-1=\sum\limits_{n=1}^\infty \bfone_n$ and $E_{\lambda,g}=E_{\lambda,I}=\sum\limits_{n=0}^\infty\bfone_n$. Then we have
\begin{equation*}
E_{\lambda,g}(E_{\lambda,f})=\left\{\begin{array}{cc}1+\displaystyle\sum_{n=1}^{\infty}B(n)\bfone_n, & \lambda=0,\\[20pt]
1+\displaystyle\sum_{n=1}^{\infty}\overline{B}(n)\bfone_n, & \lambda=1,\end{array}\right.
\end{equation*}
where $B(n)$ is the $n$-th Bell number, and $\overline{B}(n)$ is defined in Definition \ref{generalized-par}.
\end{corollary}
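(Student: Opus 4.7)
The plan is to apply the Generalized Composition Formula (Theorem~\ref{g-composition}) directly to the specific functions $f$ and $g$ implicit in our choice of $E_{\lambda,f}$ and $E_{\lambda,g}$, and then interpret the resulting sum combinatorially in terms of (generalized) partitions.

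First I would unpack the data: from $E_{\lambda,f}=\sum_{n=1}^{\infty}\bfone_n$ we read off $f\colon \mathbb{P}\to \mathbb{R}$ with $f(n)=1$ for all $n\geq 1$, and from $E_{\lambda,g}=\sum_{n=0}^{\infty}\bfone_n$ we read off $g\colon \mathbb{N}\to \mathbb{R}$ with $g(n)=1$ for all $n\geq 0$ (in particular $g(0)=1$, so the hypothesis of Theorem~\ref{g-composition} is met). Substituting into the formula for $h(n)$ collapses the products $f(\#B_1)\cdots f(\#B_k)g(k)$ to $1$, yielding
\begin{equation*}
h(n)=\sum_{\substack{\{B_1,\ldots,B_k\}\in \Pi'([n])\\ k\geq 1}} \lambda^{\#B_1+\cdots+\#B_k-\#(B_1\cup\cdots\cup B_k)}, \quad n\geq 1,
\end{equation*}
and $h(0)=1$.

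The second step is to split into the two cases according to $\lambda$. For $\lambda=1$ every term contributes $1$, so $h(n)$ counts all generalized partitions of $[n]$, regardless of the number of blocks; by the very definition of $\overline{B}(n)=\sum_{k=1}^{n}\overline{S}(n,k)$ in Definition~\ref{generalized-par}, we obtain $h(n)=\overline{B}(n)$. For $\lambda=0$ only those generalized partitions survive for which the exponent $\#B_1+\cdots+\#B_k-\#(B_1\cup\cdots\cup B_k)$ is zero, i.e., those with pairwise disjoint blocks. Since $B_1\cup\cdots\cup B_k=[n]$ is already forced by membership in $\Pi'([n])$, the surviving configurations are exactly the ordinary partitions of $[n]$ (with blocks ordered by their maxima, which is a unique canonical ordering); hence $h(n)=\sum_{k=1}^{n}S(n,k)=B(n)$.

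Finally I would assemble the conclusion $E_{\lambda,h}=E_{\lambda,g}(E_{\lambda,f})$ directly from Theorem~\ref{g-composition} and the case analysis just done. There is no substantive obstacle here: the work has all been loaded into the preceding Generalized Composition Formula and into the definitions of $\overline{S}(n,k)$, $\overline{B}(n)$, $S(n,k)$ and $B(n)$. The one small point worth emphasizing in the write-up is that in the $\lambda=0$ case the ordering condition $\max B_1 < \cdots < \max B_k$ built into $\Pi'([n])$ singles out a canonical representative of each unordered set partition, so no overcounting occurs and one really does recover $B(n)$ rather than $k!\,S(n,k)$ summed over $k$.
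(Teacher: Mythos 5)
Your proposal is correct and follows essentially the same route as the paper: the paper obtains this corollary by substituting $f\equiv 1$ on $\mathbb{P}$ and $g\equiv 1$ on $\mathbb{N}$ into Theorem~\ref{g-composition} (citing the same argument used for the preceding corollary), exactly as you do. Your extra remark explaining why the $\lambda=0$ case yields $B(n)$ rather than an ordered overcount is a useful clarification but not a different method.
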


Before proving the next result, we give a definition similar to Definition~\ref{B-cover}.

\begin{definition}
For $k, \ell \geq 1$ and $k+\ell-1 \leq n \leq k\ell$, let $B'(n,k,\ell)$ denote the number of $k$-tuples $(T_1,\cdots,T_k)$
of (not necessarily disjoint) subsets $T_1,\cdots, T_k$ of $[n]$
such that $T_1 \cup \cdots \cup T_k=[n]$, $\#T_i=\ell$ for $1
\leq i \leq k$, and all the largest elements of the subsets are different from each other. For the other values of $n, k, \ell$, we set $B'(n,k,\ell)=0$.
\end{definition}

Then by the same proof as for Corollary~\mref{co:1k}, we have
\begin{corollary} If $E_{\lambda,f}=\bfone_\ell$ and $E_{\lambda,g}=\bfone_k$ for $k,\ell \geq 1$, then we have
\begin{equation*}
E_{\lambda,g}(E_{\lambda,f})=1+\sum_{n=1}^{\infty}B'(n,k,\ell)\bfone_n.
\end{equation*}
\end{corollary}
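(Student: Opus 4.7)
The plan is to invoke Definition~\ref{composition} and Lemma~\ref{E-f-k} with the explicit functions encoded by the hypotheses, following the same template as the proof of Corollary~\ref{co:1k}. From $E_{\lambda,f}=\bfone_\ell$ read off $f(m)=\delta_{\ell,m}$, and from $E_{\lambda,g}=\bfone_k$ read off $g(m)=\delta_{k,m}$. Since $g$ is supported at the single point $k$,
$$E_{\lambda,g}(E_{\lambda,f})=\sum_{j\geq 0}g(j)\,E_{\lambda,f^{[j]}}=E_{\lambda,f^{[k]}},$$
so the composition collapses to a single divided power, which Lemma~\ref{E-f-k} expands as
$$E_{\lambda,f^{[k]}}=\sum_{n\geq k}\sum_{\{B_1,\ldots,B_k\}\in\Pi'([n])}\lambda^{\#B_1+\cdots+\#B_k-\#(B_1\cup\cdots\cup B_k)}\prod_{i=1}^k f(\#B_i)\,\bfone_n.$$

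Next I would observe that $\prod_i f(\#B_i)$ vanishes unless every block $B_i$ has cardinality exactly $\ell$. On the configurations that survive, $\sum_i \#B_i=k\ell$ while $\#(B_1\cup\cdots\cup B_k)=n$ by the very definition of $\Pi'([n])$, so the weight simplifies to $\lambda^{k\ell-n}$ and the coefficient of $\bfone_n$ becomes the number of generalized partitions of $[n]$ into $k$ blocks of size $\ell$.

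The one step that needs a short argument is the identification of this count with $B'(n,k,\ell)$. An element of $\Pi'([n])$ is, by Definition~\ref{generalized-par}, an unordered collection $\{B_1,\ldots,B_k\}$ whose blocks have strictly increasing maxima and union $[n]$; conversely $B'(n,k,\ell)$ counts ordered $k$-tuples $(T_1,\ldots,T_k)$ of $\ell$-subsets of $[n]$ with distinct maxima and union $[n]$. Sorting a $k$-tuple counted by $B'(n,k,\ell)$ by the (necessarily distinct) maxima of its entries yields a unique generalized partition, giving the required bijection with no symmetry factor. Assembling the three steps produces the stated formula. The bookkeeping around this bijection is the only non-mechanical point; once it is in place, the result is a direct specialization of Lemma~\ref{E-f-k}.
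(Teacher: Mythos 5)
Your overall route is the paper's: the paper's proof is literally ``by the same proof as for Corollary~\ref{co:1k}'', i.e.\ specialize the composition formula to $f(m)=\delta_{\ell,m}$, $g(m)=\delta_{k,m}$, and your reduction of the composition to the single divided power $E_{\lambda,f^{[k]}}$ via Lemma~\ref{E-f-k}, the observation that only configurations with every $\#B_i=\ell$ survive, and the simplification of the exponent to $k\ell-n$ are all correct. The problem is the last step, where you claim a ``bijection with no symmetry factor'' between the $k$-tuples counted by $B'(n,k,\ell)$ and the generalized partitions indexing Lemma~\ref{E-f-k}. Sorting by maxima is a well-defined map, but it is $k!$-to-one, not a bijection: the three conditions defining $B'(n,k,\ell)$ (union equal to $[n]$, all blocks of size $\ell$, maxima pairwise distinct) are invariant under permuting the entries of the tuple, so every generalized partition with $k$ blocks of size $\ell$ has exactly $k!$ ordered preimages. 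Under the paper's literal definition of $B'$ (which, like $B(n,k,\ell)$ in Definition~\ref{B-cover}, counts ordered tuples), what your computation actually yields for the coefficient of $\bfone_n$ is $\lambda^{k\ell-n}\,B'(n,k,\ell)/k!$.

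There are two further mismatches you pass over when you write ``assembling the three steps produces the stated formula'': your derivation carries the factor $\lambda^{k\ell-n}$, which is absent from the stated right-hand side, and since $g(0)=0$ the composition $\sum_{j\geq 0} g(j)E_{\lambda,f^{[j]}}$ has no constant term, whereas the statement begins with ``$1+$''. These discrepancies arguably originate in the statement itself, and the paper's one-line proof does not confront them either; but a proof cannot simply assert that an expression of the form $\sum_{n\geq k}\lambda^{k\ell-n}N(n,k,\ell)\,\bfone_n$ (with $N(n,k,\ell)$ the number of generalized partitions of $[n]$ into $k$ blocks of size $\ell$) equals $1+\sum_{n\geq 1}B'(n,k,\ell)\bfone_n$. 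You would need either to adjust the statement (drop the $1$, restore $\lambda^{k\ell-n}$, and read $B'$ as counting tuples already ordered by increasing maxima) or to spell out the convention that reconciles the two sides. As written, the bijection claim is false and the final assembly does not follow.
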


\section{Generating functions in free commutative Rota-Baxter algebras}
\mlabel{sec:more}
In this section, we will study generating functions in $\csha_\lambda(x)$, the complete free commutative Rota-Baxter algebra on one generator $x$.
In Section~\mref{ss:wzg} and~\mref{ss:nzgen}, we will discuss the weight zero case and the
general weight case, respectively.
We first recall the notations:
$$ \sha_\lambda(x):=\sha_\lambda(\RR[x])=\bigoplus_{k=1}^\infty \RR[x]^{\ot k}= \RR\{ x^{n_0}\ot x^{n_1}\ot \cdots \ot x^{n_k}\,|\, n_0,\cdots, n_k\geq 0, k\geq 0\}.$$
Recall that the product in $\sha_\lambda(x)$ and hence in $\csha_\lambda(x)$ is given by the mixable shuffle product $\shpr$ which will be suppressed.

We state some definitions and notations which
are used in this section.
A composition of $n$ is an expression of $n$ as an ordered
sum of positive integers. If exactly $k$ summands appear in a
composition $\sigma$, we say that $\sigma$ has $k$ parts, and we
call $\sigma$ a $k$-composition.
Let $I=(i_1,\cdots,i_t)\in \mathbb{P}^t$. Define the {\bf norm} of
$I$ to be $|I|=\sum_{s=1}^t i_s$ and its {\bf length} to be
$\ell(I)=t$. Also denote $\binc{n}{I}=\binc{n}{i_1,\cdots,i_t}$.

\subsection{The weight zero case}
\mlabel{ss:wzg}
We consider the case when $\lambda=0$. Then for $x^{n_0}\ot \cdots \ot x^{n_k}$ and $x^{m_0}\ot \cdots \ot x^{m_\ell}$ we have
$$ (x^{n_0}\ot \cdots \ot x^{n_k})  (x^{m_0}\ot \cdots \ot x^{m_\ell}) = x^{n_0+m_0}\ot\big( (x^{n_1}\ot \cdots \ot x^{n_k})\ssha (x^{m_1}\ot \cdots \ot x^{m_\ell})\big),$$
where $\ssha$ is the shuffle product. In particular, $$(1\ot x^{\ot n}) (1\ot x^{\ot m})= {n+m \choose n}\, 1 \ot x^{\ot (n+m)}$$
and more generally,
\begin{equation} \prod_{i=1}^k (1\ot x^{\ot n_i}) = {n_1+\cdots+n_k \choose n_1,\cdots,n_k} \, (1\ot x^{\ot (n_1+\cdots +n_k)}).
\mlabel{eq:multsh}
\end{equation}
Also,
\begin{equation}
 \prod_{i=1}^k (1\ot x^i) = \sum_{\sigma\in S_k} 1\ot x^{\sigma(1)}\ot \cdots \ot x^{\sigma(k)},
\mlabel{eq:permsh}
\end{equation}
where $S_k$ is the set of permutations on $[k]$.

Then we easily obtain

\begin{proposition}
For $n,k \geq 1$, we have the following relations in $\sha_0(x)$.
\begin{align}
(1\ot x^{\ot k})^n &= \frac{(kn)!}{(k!)^n}(1\ot x^{\ot kn}),\label{lambda0-nk}\\
\prod_{k=1}^n(1\otimes x^{\otimes k}) &= {{n+1 \choose 2} \choose
{1,2,3,\ldots,n}}(1\otimes x^{\otimes {n+1 \choose 2}}).\nonumber
\end{align}
Further,
$$\prod_{n=1}^{\infty}(1+(1\otimes x^n)) = \sum_{n=0}^{\infty}\sum_{|J|=n}(1\otimes x^{\otimes J}),$$
where $J$ denotes all the compositions of $n$ with distinct parts.
\mlabel{thm:multsh}
\end{proposition}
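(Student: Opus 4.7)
All three relations are specializations or resummations of the two preparatory identities Eq.~(\mref{eq:multsh}) and Eq.~(\mref{eq:permsh}) describing weight-zero mixable shuffle products, and my plan is to address them in order.

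For the first equation, I would apply Eq.~(\mref{eq:multsh}) to the $n$ copies of the factor $1\otimes x^{\otimes k}$; the multinomial coefficient then collapses to $\binom{nk}{k,k,\ldots,k} = (nk)!/(k!)^n$, yielding the identity immediately. For the second equation, I would apply the same identity to the $n$ factors $1\otimes x^{\otimes 1}, 1\otimes x^{\otimes 2}, \ldots, 1\otimes x^{\otimes n}$; the total exponent becomes $1+2+\cdots+n = \binom{n+1}{2}$, and the multinomial coefficient is exactly $\binom{\binom{n+1}{2}}{1,2,\ldots,n}$, as claimed.

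The third formula is the substantive step. My plan is to expand the infinite product formally in $\csha_0(x)$: each factor contributes either $1$ or $1\otimes x^n$, so
\[
\prod_{n=1}^\infty\bigl(1+(1\otimes x^n)\bigr)=\sum_{S}\prod_{k\in S}(1\otimes x^k),
\]
where $S$ ranges over finite subsets of $\PP$. For a fixed $S=\{k_1<\cdots<k_m\}$, applying the natural extension of Eq.~(\mref{eq:permsh}) to arbitrary distinct positive exponents gives
\[
\prod_{i=1}^m(1\otimes x^{k_i})=\sum_{\sigma\in S_m}1\otimes x^{k_{\sigma(1)}}\otimes\cdots\otimes x^{k_{\sigma(m)}}.
\]
The combinatorial step that finishes the argument is a bijection: pairs $(S,\sigma)$ with $S\subset\PP$ finite, $|S|=m$, and $\sigma\in S_m$ correspond precisely to compositions $J=(k_{\sigma(1)},\ldots,k_{\sigma(m)})$ of $n=k_1+\cdots+k_m$ with distinct positive parts. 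Re-indexing the resulting double sum by $n=|J|$ then yields the right-hand side.

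The only delicate point, and the expected main obstacle, is verifying the extension of Eq.~(\mref{eq:permsh}) from the specific exponents $1,2,\ldots,k$ to arbitrary distinct positive exponents $k_1,\ldots,k_m$. This is immediate because at weight zero the mixable shuffle product of the length-two tensors $1\otimes x^{k_i}$ leaves $1$ in the first slot and reduces to the ordinary shuffle of the length-one words $x^{k_1},\ldots,x^{k_m}$ in the remaining slots; that shuffle is the sum over all $m!$ orderings, and distinctness of the $k_i$'s guarantees that the resulting tensors are pairwise distinct.
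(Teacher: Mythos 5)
Your proposal is correct and follows the same route as the paper, which simply notes that the first two equations follow from Eq.~(\mref{eq:multsh}) and the third from Eq.~(\mref{eq:permsh}); you have merely filled in the details the paper leaves implicit. In particular, your observation that Eq.~(\mref{eq:permsh}) must be extended from exponents $1,2,\ldots,k$ to arbitrary distinct exponents, and your justification of that extension via the reduction of the weight-zero product to an ordinary shuffle of single letters, correctly supplies a step the paper glosses over.
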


\begin{proof}
The first two equations follow from Eq.~(\mref{eq:multsh}). The third equation follows from Eq.~(\mref{eq:permsh}).
\end{proof}

By treating $1 \ot x^{\ot k} \in \csha_0(X)$ as the base $q$ in basic hypergeometric series, we next apply $q$-series identities to derive generating functions for some special sequences.

Recall the $q$-shifted factorial~\mcite{G-R}
$$(a;q)_\infty=
\prod_{n=1}^{\infty}(1-aq^{n-1}) \text{\ \  and \ \  }(a;q)_n
=\frac{(a;q)_\infty}{(aq^n;q)_\infty}, \text{ for } n\in \mathbb{Z}.$$ \\
Then we have the following identities~\mcite{Be} in
$\QQ[[q]]$.
\begin{align}
\varphi(q)&:=\sum_{n=-\infty}^{\infty}q^{n^2} = (-q;q^2)_{\infty}^2(q^2;q^2)_{\infty},\mlabel{eq:phi}\\
\psi(q)&:=\sum_{n=0}^{\infty}q^{n+1 \choose 2} = \frac{(q^2;q^2)_{\infty}}{(q;q^2)_{\infty}}, \mlabel{eq:psi}\\
f(-q)&:=\sum_{n=-\infty}^{\infty}(-1)^nq^{\frac{n(3n-1)}{2}} = (q;q)_{\infty}.\mlabel{eq:f}
\end{align}
We have the following generating functions for the sequences of the factorial of square numbers, triangular numbers, and pentagonal numbers.

\begin{theorem} We have
\begin{align*}
&\sum_{n=-\infty}^{\infty}(n^2)!(1 \ot x^{\ot n^2}) = \prod_{n=1}^{\infty}(1+(2n-1)!(1 \ot x^{\ot (2n-1)}))^2(1-(2n)!(1 \ot x^{ \ot 2n})),\\
&\sum_{n=0}^{\infty}{n+1 \choose 2}!(1 \ot x^{\ot {n+1 \choose 2}}) = \prod_{n=1}^{\infty}\frac{(1-(2n)!(1 \ot x^{\ot (2n)}))}{(1-(2n-1)!(1 \ot x^{\ot (2n-1)}))},\\
&\sum_{n=-\infty}^{\infty}(-1)^n\left(\frac{n(3n-1)}{2}\right)!(1 \ot x^{\ot \frac{n(3n-1)}{2}}) = \prod_{n=1}^{\infty}(1-n!(1 \ot x^{\ot n})).
\end{align*}
\end{theorem}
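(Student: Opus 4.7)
The strategy is to reduce all three identities to the classical $q$-series identities \eqref{eq:phi}, \eqref{eq:psi}, \eqref{eq:f} by transporting them through a continuous algebra homomorphism from formal power series in $q$ to $\csha_0(x)$. Concretely, I would introduce the map
$$\varphi: \QQ[[q]] \longrightarrow \csha_0(x), \qquad q \longmapsto 1\ot x.$$
By the case $k=1$ of Eq.~\eqref{lambda0-nk}, $(1\ot x)^n = n!(1\ot x^{\ot n})$ lies in $\RR[x]^{\ot(n+1)} \subseteq \Fil^{n+1}\csha_0(x)$, so $\varphi$ sends the $(q)$-adic filtration into the decreasing filtration of $\csha_0(x)$. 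Since $\csha_0(x)$ is complete, the assignment $q\mapsto 1\ot x$ extends uniquely to a continuous $\QQ$-algebra homomorphism on all of $\QQ[[q]]$.

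Next, I would record the images of the $q$-shifted factorials occurring on the three right-hand sides. By continuity,
\begin{align*}
\varphi\bigl((-q;q^2)_\infty\bigr) &= \prod_{n=1}^\infty\bigl(1 + (2n-1)!(1\ot x^{\ot(2n-1)})\bigr),\\
\varphi\bigl((q;q^2)_\infty\bigr) &= \prod_{n=1}^\infty\bigl(1 - (2n-1)!(1\ot x^{\ot(2n-1)})\bigr),\\
\varphi\bigl((q^2;q^2)_\infty\bigr) &= \prod_{n=1}^\infty\bigl(1 - (2n)!(1\ot x^{\ot 2n})\bigr),\\
\varphi\bigl((q;q)_\infty\bigr) &= \prod_{n=1}^\infty\bigl(1 - n!(1\ot x^{\ot n})\bigr),
\end{align*}
with each product converging in $\csha_0(x)$ because its $n$-th factor differs from $1$ by an element of $\Fil^{n+1}$. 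The three displayed left-hand sides are obtained analogously term by term: $\varphi(q^{n^2}) = (n^2)!(1\ot x^{\ot n^2})$, $\varphi(q^{\binom{n+1}{2}}) = \binom{n+1}{2}!(1\ot x^{\ot\binom{n+1}{2}})$, and $\varphi(q^{n(3n-1)/2}) = (n(3n-1)/2)!(1\ot x^{\ot n(3n-1)/2})$.

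Finally, I would apply $\varphi$ to each of the classical identities \eqref{eq:phi}, \eqref{eq:psi}, \eqref{eq:f}. Since $\varphi$ is a continuous algebra homomorphism, each equation of formal power series is transported to the corresponding equation in $\csha_0(x)$, yielding exactly the three displayed identities in the theorem. The only substantive input is the filtration estimate $(1\ot x)^n \in \Fil^{n+1}$; there is no serious obstacle in the argument, as all the combinatorial content is already packaged in Berndt's product expansions for $\varphi(q)$, $\psi(q)$, and $f(-q)$.
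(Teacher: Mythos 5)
Your proposal is correct and matches the paper's proof: the paper likewise obtains the three identities by applying the unique (continuous) algebra homomorphism $\RR[[q]]\to\csha_0(x)$ sending $q\mapsto 1\ot x$ to the identities \eqref{eq:phi}, \eqref{eq:psi}, \eqref{eq:f}, and then invoking $(1\ot x)^n=n!(1\ot x^{\ot n})$. Your explicit verification of the filtration estimate is a slightly more careful justification of the continuity that the paper simply attributes to the universal property of the completed free algebra.
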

\begin{proof}
Note that $\RR[[q]]$ is the free completed commutative $\RR$-algebra on $q$ in the sense that for any complete $\RR$-algebra $A$ and nonunit $a\in A$, there is unique algebra homomorphism $g:\RR[[q]]\to A$ such that $g(q)=a$. Thus about identities in $\RR[[q]]$ induce similar identities in $A$. Take $A=\csha(x)$, $a=1\ot x$ and let
$g_x: \RR[[q]]\to \csha(x)$ be the algebra homomorphism such that $g_x(q)=1\ot x$. Applying $g_x$ to the above three $q$-series identities in $ \RR[[q]]$, we obtain identities for $x\in \csha(x)$.

For example, applying $g_x$ to Eq.~(\mref{eq:phi}), we have
$$ \sum_{n=-\infty}^\infty (1\ot x)^{n^2}= \left(\prod_{n=1}^\infty (1+(1\ot x)^{\ot (2n-1)})\right)^2 \left( \prod_{n=1}^\infty (1-(1\ot x)^{2n})\right).$$
Then using the following special case of \eqref{lambda0-nk}
$$(1\ot x)^n = n!(1\ot x^{\ot n}),$$
we obtain the first equation.
The proofs of the other two equations are similar.
\end{proof}

\subsection{The general weight case}
\mlabel{ss:nzgen}

Before giving the main theorem in this section, we first need some definitions and notations.

Let $\pi(kn)$ denote the set of compositions of $kn$ with each part
less or equal to $n$. For a composition $I=(i_1, i_2,\cdots,i_t) \in
\pi(kn)$, we define a {\bf restricted partition of type
$I$} of the
multiset~\mcite{St} $S:=S_{n,k}:=\{1^k,2^k,\cdots, n^k\}$ to be a partition $\{B_1,\cdots,B_t\}$ of $S$ such that
\begin{enumerate}
\item $\# B_j=i_j, 1\leq j\leq t$;
\item each $B_j$ is a subset of $[n]$. In other words, $\#(B_j\cap \{j^k\})\leq 1$;
\end{enumerate}

Let $C_{I}:=C_{n,k,I}$ denote the number of the restricted
partitions of $S$ of type $I$. We will use the convention that, for
$I=\emptyset$, take $I \in \pi(0)$, $C_{I}=1$, $\ell(I)=kn$, and
$1\otimes x^{\otimes I}=1.$

\begin{proposition}\label{C1n}For $n,k \geq 1$, we have
\begin{equation*}
C_{(\underbrace{1,1,\cdots,1}_{kn})}=\frac{(kn)!}{(k!)^n} \qquad\text{and}\qquad
C_{(\underbrace{n,n, \cdots,n}_k)} = 1.
\end{equation*}
\end{proposition}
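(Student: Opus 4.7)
The plan is to unwind the definition of restricted partition directly in each of the two extreme cases; each reduces to a one-line combinatorial count.

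For the first identity, observe that when $I=(1,1,\ldots,1)$ has $kn$ parts, every block $B_j$ of a restricted partition is forced to be a singleton in $[n]$. Writing $B_j=\{s_j\}$ with $s_j\in[n]$, the condition that $(B_1,\ldots,B_{kn})$ partition the multiset $S_{n,k}=\{1^k,2^k,\ldots,n^k\}$ translates directly into the requirement that the sequence $(s_1,\ldots,s_{kn})\in[n]^{kn}$ contains each $i\in[n]$ exactly $k$ times. The number of such sequences is the multinomial coefficient
\[
\binc{kn}{k,k,\ldots,k}=\frac{(kn)!}{(k!)^n},
\]
as claimed.

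For the second identity, when $I=(n,n,\ldots,n)$ has $k$ parts, each block $B_j$ is an $n$-element subset of $[n]$, so necessarily $B_j=[n]$ for every $1\leq j\leq k$. Taking the multiset union of $k$ copies of $[n]$ yields exactly $\{1^k,2^k,\ldots,n^k\}=S_{n,k}$, so the partition condition is automatically satisfied. Hence there is a unique restricted partition of type $(n,\ldots,n)$, giving $C_{(n,\ldots,n)}=1$.

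The main (and essentially the only) point of care is in parsing the definition correctly: $S_{n,k}$ is a multiset while each $B_j$ must be an ordinary subset of $[n]$ (i.e.\ contain each element at most once), and the tuple $(B_1,\ldots,B_t)$ is indexed in accordance with the ordered composition $I$ so that $\#B_j=i_j$. Once this is set up, both identities follow by direct counting with no further obstacle; the small sanity checks $(n,k)=(2,1)$ and $(2,2)$ confirm the count $(kn)!/(k!)^n$ for the all-ones type.
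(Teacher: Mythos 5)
Your proof is correct and follows essentially the same route as the paper's: in the all-ones case both arguments reduce the count to arrangements of the multiset $S_{n,k}$ into an ordered sequence of $kn$ singletons, giving the multinomial coefficient $\frac{(kn)!}{(k!)^n}$, and in the all-$n$'s case both observe that each block is forced to equal $[n]$. Your explicit remark that the blocks must be treated as ordered (indexed by the composition) is a worthwhile clarification of a point the paper leaves implicit.
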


\begin{proof}
For $I \in \pi(kn)$, if $I=(\underbrace{1,1,\cdots,1}_{kn})$, and
the multiset $S=\{1^k,2^k,\cdots, n^k\}$, Then $C_{I}$ denotes the
number of ways to put the $kn$ elements in $S$ into $kn$ different
subsets, where each subset contains $1$ element. Therefore, we have
$$C_{(\underbrace{1,1,\cdots,1}_{kn})}=\frac{(kn)!}{(k!)^n}.$$

If $I=(\underbrace{n,n, \cdots, n}_k)$ and $S=\{1^k,2^k,\cdots,
n^k\}$, then $C_{I}$ denotes the number of ways to put the $kn$
elements in $S$ into $k$ different subsets, where each subset
contains $n$ different elements. There is only one way to do this, hence the second equation.
\end{proof}

\begin{proposition}\label{Cnk}For $n,k \geq 1$, we have
\begin{equation*}
\sum_{I \in \pi(kn)}C_I=C(n,k),
\end{equation*}
where $C(n,k)$ denotes the number
of ways to partition $kn$ elements in a multiset
$S=\{1^k,2^k,\cdots, n^k\}$ into several nonempty subsets of $[n]$.
\end{proposition}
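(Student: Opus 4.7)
The plan is to interpret both sides of the claimed identity as counts of the same collection of objects, namely the ordered partitions of the multiset $S=\{1^k,2^k,\ldots,n^k\}$ into nonempty subsets of $[n]$, and then partition that collection according to the composition of block sizes. With the conventions of the paper, the indexing of blocks $B_1,\ldots,B_t$ by $1,\ldots,t$ and the matching $\#B_j=i_j$ already force $C_I$ to be the count of ordered tuples $(B_1,\ldots,B_t)$ with $B_j\subseteq[n]$, $\#B_j=i_j$, and multiset sum $B_1+\cdots+B_t = S$.

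First I would unpack the right-hand side: $C(n,k)$ counts ordered sequences $(B_1,\ldots,B_t)$ (with $t\geq 1$) of nonempty subsets of $[n]$ whose multiset sum is $S$, equivalently, such that each element $i\in[n]$ lies in exactly $k$ of the $B_j$. Next I would observe that for any such sequence the integer tuple $I:=(\#B_1,\ldots,\#B_t)$ is automatically an element of $\pi(kn)$, since $\sum_j \#B_j = \#S = kn$ and $\#B_j\leq n$ because $B_j\subseteq[n]$.

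The final step is the classification itself: group the ordered partitions counted by $C(n,k)$ according to the associated composition $I$. The fiber over a fixed $I\in\pi(kn)$ is, by definition, the set of restricted partitions of $S$ of type $I$, hence has cardinality $C_I$. Summing cardinalities over the fibers gives
$$C(n,k)\;=\;\sum_{I\in\pi(kn)} C_I,$$
as required.

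The main obstacle here is conceptual rather than technical: one has to be careful that $C(n,k)$ really refers to ordered (labeled) partitions — a reading which, as noted, is forced on us by the form of the definition of $C_I$ — and that the map from ordered partitions to their size-compositions is well-defined and exhausts $\pi(kn)$. Once that is settled, the identity is a one-line counting-by-classification and no further work is needed.
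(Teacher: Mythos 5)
Your proof is correct and follows essentially the same route as the paper, whose own argument is just a one-sentence version of the same classification-by-size-composition: summing $C_I$ over all $I\in\pi(kn)$ removes the constraint on block sizes and leaves exactly the partitions counted by $C(n,k)$. Your explicit remark that the blocks must be read as indexed (ordered) for the fibration to work is a sensible clarification of a point the paper leaves implicit (though its proof of Theorem~\ref{nk} confirms that reading).
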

\begin{proof}
For all the compositions in $\pi(kn)$, according to the definition of $C_I$, we don't need to limit the number of elements in each
subset of $[n]$.
\end{proof}

Tracing back to Cartier's work~\mcite{Ca2} forty years ago, we also recall the
description of the mixable shuffle product in terms of order
preserving maps~\cite[\S~3.1.4]{Gub} that are called {\bf stuffles} in the
study of multiple zeta values~\mcite{3BL,Br,Ho1}. We will use this interpretation in the
proof of Theorem~\mref{nk}.

For $0\leq r \leq \min(m,n)$, define
\begin{equation*}
J_{m,n,r}=\left\{(\varphi,\psi)\left|\begin{array}{l}
\varphi:[m]\rightarrow [m+n-r], \psi:[n]\rightarrow [m+n-r] \text{
are order preserving}\\
\text{injective maps and } \im(\varphi)\cup \im(\psi)=[m+n-r]
\end{array}\right.\right\}.
\end{equation*}
Let $a\in A^{\otimes m}$, $b\in A^{\otimes n}$ and $(\varphi,
\psi)\in J_{m,n,r}$. Define $a\ssha_{(\varphi, \psi)}b$ to be the
pure tensor of length $m+n-r$ whose $i$-th factor is
\begin{equation*}
(a\ssha_{(\varphi, \psi)}b)_i=\left\{
\begin{array}{ll}
a_j,& \text{if } i=\varphi(j),\ i\not\in \im\psi,\\
b_j,& \text{if } i=\psi(j),\ i\not\in \im\varphi,\\
a_jb_{j'},&\text{if } i=\varphi(j),\ i=\psi(j').
\end{array}\right.
\end{equation*}
In short,
$$a\ssha_{(\varphi,\psi)}b=a_{\varphi^{-1}(1)}b_{\psi^{-1}(1)}\otimes
\cdots \otimes a_{\varphi^{-1}(m+n-r)}b_{\psi^{-1}(m+n-r)},$$ with
the convention that $a_{\emptyset}=b_{\emptyset}=1$. Then
$a\ssha_{(\varphi, \psi)}b$ is called a \textbf{stuffle of $a$ and
$b$}. The stuffle product of $a$ and $b$ is defined by
$$a\ssha_{st,\lambda}b=\sum_{r=0}^{\min(m,n)}\lambda^r\left(\sum_{(\varphi, \psi)\in J_{m,n,r}}a\ssha_{(\varphi,\psi)}b\right)$$
which coincides with the mixable shuffle product.

\begin{theorem}\label{nk} We have the generating function
\begin{equation*}
\frac{1}{1-1\ot x^{\ot k}} =\sum_{n=0}^{\infty}(1\ot x^{\ot
k})^n=\sum_{n=0}^{\infty}\sum_{I \in \pi(kn)}C_I
\lambda^{kn-\ell(I)}(1\ot x^{\ot I}) \in \widehat{\sha}_{\lambda}(x).
\end{equation*}
\end{theorem}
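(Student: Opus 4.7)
The plan is to prove the identity $(1\ot x^{\ot k})^n = \sum_{I\in \pi(kn)} C_I \lambda^{kn-\ell(I)}(1\ot x^{\ot I})$ by unpacking the $n$-fold mixable shuffle product via the stuffle interpretation recalled just before the theorem, and then summing over $n$ gives the geometric series. Once this is shown, the outer statement follows immediately, since the partial sums $\sum_{n=0}^{N}(1\ot x^{\ot k})^n$ converge in $\csha_\lambda(x)$ to the inverse of $1-1\ot x^{\ot k}$ (whose leading tensor factor $1$ has no constant term in the filtration sense).

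First I would reduce the computation to the non-augmented part. Writing $1\ot x^{\ot k}=1\ot (x^{\ot k})$ and observing that in the augmented mixable shuffle the leading tensor factors simply multiply in $A=\RR[x]$, the product of $n$ copies is $1\ot\bigl(x^{\ot k}\bigr)^{\ssha_{st,\lambda}\, n}$. So the whole problem reduces to computing the $n$-fold stuffle product of the length-$k$ pure tensor $x^{\ot k}$ with itself. Using the maps $(\varphi_s:[k]\to[\ell])_{s=1}^{n}$ that appear in the stuffle definition, I can describe each term as follows: choose a target length $\ell$, choose $n$ order-preserving injections into $[\ell]$ whose images jointly cover $[\ell]$, and record at each target position $j\in[\ell]$ the number $i_j$ of sequences whose image contains $j$. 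The resulting pure tensor is $x^{i_1}\ot\cdots\ot x^{i_\ell}$, the weight is $\lambda^{kn-\ell}$ (since each of the $n$ sequences contributes $k$ positions, and $kn-\ell$ is the total number of ``overlaps''), and the sequence $I=(i_1,\ldots,i_\ell)$ is a composition of $kn$ with every part at most $n$, i.e.\ $I\in\pi(kn)$.

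The combinatorial heart is the bijection between these stuffle data and restricted partitions of the multiset $S_{n,k}=\{1^k,2^k,\ldots,n^k\}$. Given the stuffle data, for each target position $j$ let $B_j\subseteq[n]$ be the set of source sequences $s$ whose image hits $j$; then $|B_j|=i_j$, each sequence $s$ appears in exactly $k$ of the $B_j$ (because $\varphi_s$ is injective on $[k]$), and the $B_j$ are subsets of $[n]$ with no repetition, so $\{B_1,\ldots,B_\ell\}$ is precisely a restricted partition of $S_{n,k}$ of type $I$. Conversely, given such a restricted partition $\{B_1,\ldots,B_\ell\}$, for each $s\in[n]$ the set $\{j : s\in B_j\}$ has exactly $k$ elements, and listing them in increasing order recovers the order-preserving injection $\varphi_s$. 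Thus the number of stuffle data producing the tensor $1\ot x^{\ot I}$ is exactly $C_I$, which gives the inner identity.

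The main obstacle I anticipate is checking that this counting really produces the composition $I$ as an \emph{ordered} tuple with the correct sign of coefficients, and that no overcounting arises from permuting the $n$ source copies (it does not: the copies are labeled by $s\in[n]$ in the stuffle, and the blocks $B_j$ are indexed by $j\in[\ell]$ in the target, so both sides are intrinsically ordered). After that, summing over $n\geq 0$ telescopes to the geometric series
\begin{equation*}
\sum_{n=0}^{\infty}(1\ot x^{\ot k})^n = \frac{1}{1-1\ot x^{\ot k}},
\end{equation*}
which lives in $\csha_\lambda(x)$ since $1\ot x^{\ot k}\in \Fil^{1}\sha_\lambda(x)$, completing the proof.
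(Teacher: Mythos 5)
Your proposal is correct and follows essentially the same route as the paper: both expand $(1\ot x^{\ot k})^n$ via the stuffle description of the mixable shuffle product and then establish the identity $C_I=S_I$ by the same bijection, sending a family of order-preserving injections $\varphi_s:[k]\to[\ell]$ to the blocks $B_j=\{s: j\in \im\varphi_s\}$ of a restricted partition of the multiset $S_{n,k}$ (the paper phrases this through the map $g:[n]\times[k]\to[t]$, which is the same data). Your explicit remarks on the reduction to the non-augmented part, the ordering of blocks, and the filtration argument for summing the geometric series are all consistent with the paper's treatment.
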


\begin{proof}
According to the description of the mixable shuffle product
in terms of stuffles stated at the beginning of this
section, for $(1\ot x^{\ot k})^n$, we define an {\bf $n$-fold stuffle} to be
\begin{equation*}
\mathcal{L}_{k,n,t}=\left\{\{f_1,f_2,\cdots,f_n\}\left|\begin{array}{l}
f_i:[k]\to [t] \text{ for } 1 \leq i \leq n \text{
are order preserving}\\
\text{injective maps and }\cup_{i=1}^n \im\, f_i = [t]
\end{array}\right.\right\}.
\end{equation*}
Set
$$\overline{\mathcal{L}}_{k,n,t}=\bigcup_{t=k}^{kn}\mathcal{L}_{k,n,t}.$$
Then for each $\{f_1,f_2,\cdots,f_n\}\in
\overline{\mathcal{L}}_{k,n,t}$, we get a term in the expansion of
$(1\ot x^{\ot k})^n$ as
$$1\otimes x^{i_1} \otimes x^{i_2}\otimes \cdots \otimes x^{i_t},$$
where $$i_j=\#(\cup_{i=1}^n f^{-1}(j))\text{ for } 1\leq j \leq t.$$
Further the following relations hold.
$$1\leq i_j \leq n \text{ for } 1\leq j \leq t \text{ and }
\sum_{j=1}^ti_j=kn.$$ Therefore, each term of $(1\ot x^{\ot k})^n$
has the form $1\otimes x^{\otimes I}$, where $I$ is a composition of
$kn$ with each part less or equal to $n$.

Given $I=(i_1,\cdots,i_t)\in \pi(kn)$, we also define an {\bf  $n$-fold stuffle of type $I$} to be
\begin{equation*}
\left\{\{f_1,f_2,\cdots,f_n\}\left|\begin{array}{l} f_i:[k]\to [t]
\text{ for } 1 \leq i \leq n \text{
are order preserving injective maps,}\\
\text{such that }\cup_{i=1}^n \im f_i = [t] \text{ and }
\#(\cup_{i=1}^n f^{-1}(j)) =i_j, 1\leq j\leq t.
\end{array}\right.\right\}.
\end{equation*}
Let $S_I$ denote the number of $n$-fold stuffles of type $I$. Then
we have
$$\sum_{n=0}^\infty (1\ot x^{\ot k})^n =
\sum_{n=0}^\infty \sum_{I\in \pi(kn)} S_I \lambda^{kn-\ell(I)} (1\ot
x^{\ot I}).$$
Thus we just need to prove $C_I=S_I$.

Let $\{f_i:[k]\to [t]\}_{1\leq i\leq n}$ be a stuffle of type $I$. We can derive from $f_i$ a
$$ g: [n]\times [k]\to [t], \quad g(i,j)=g_i(j), (i,j)\in [n]\times [k].$$
Identifying $[n]\times [k]$ with the multiset $S=\{1^k,2^k,\cdots,
n^k\}$, we obtain a restricted partition of $S$ of type $I$ by
defining
$$ B_j:= g^{-1}(j), 1\leq j\leq t.$$

Conversely, given a restricted partition $\{B_j\}_{1\leq j\leq t}$
of $S$ of type $I$ and identifying it with an ordered partition of
$[n]\times [k]$. Then define
$$g:=g_B: [n]\times [k] \to [t], \quad g(i,j)=\ell \text{ if } (i,j)\in B_\ell.$$
Then it is easy to check that the maps
$$ f_i: [k]\to [t], f_i(j):=g(i,j), 1\leq i\leq n,$$
is a stuffle of type $I$. This gives the desired bijection that gives $C_I=S_I$.
\end{proof}

Based on the definition of $C_I$ and the proof of Theorem
\ref{nk}, we have the following result.
\begin{corollary}For $n,k\geq 1$, we have
\begin{equation*}
\sum_{t=k}^{kn}B(t,n,k)=C(n,k).
\end{equation*}
\end{corollary}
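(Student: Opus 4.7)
The plan is to prove the identity in two stages: first establish the refined equality $B(t,n,k) = \sum_{I\in\pi(kn),\,\ell(I)=t} C_I$ for each $t$, and then sum over $t\in[k,kn]$ using Proposition~\ref{Cnk}, which states $\sum_{I\in\pi(kn)} C_I = C(n,k)$. I do not expect a serious obstacle: each ingredient is already in the paper, and the only delicate point is bookkeeping, since the letters $n$, $k$, $\ell$ play different roles in Definition~\ref{B-cover}, Corollary~\ref{g-Bnkl}, and Theorem~\ref{nk}.

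The hint ``Based on the definition of $C_I$ and the proof of Theorem~\ref{nk}'' suggests a combinatorial route. Fix $t\in[k,kn]$ and define $\Phi$ from the set counted by $B(t,n,k)$ to restricted partitions of $S=\{1^k,\dots,n^k\}$ with exactly $t$ blocks by sending $(T_1,\dots,T_n)$ to the ordered collection $(B_1,\dots,B_t)$ with $B_j=\{i\in[n]\,:\,j\in T_i\}$. The conditions $\#T_i=k$ and $T_1\cup\cdots\cup T_n=[t]$ translate precisely to every element $i\in[n]$ appearing in exactly $k$ blocks (so the multiplicities in $S$ are respected) and every $B_j$ being nonempty. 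The inverse, $(B_1,\dots,B_t)\mapsto(T_1,\dots,T_n)$ with $T_i=\{j\,:\,i\in B_j\}$, is just a specialization of the bijection constructed in the proof of Theorem~\ref{nk}; this gives the refined equality.

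An equivalent algebraic route is to compare two expansions of $(\bfone_k)^n\in\csha_\lambda(\RR)$. Corollary~\ref{g-Bnkl}, with its $\ell$ renamed to $k$ and its summation exponent $k$ renamed to $n$, yields
$$(\bfone_k)^n \;=\; \sum_{t=k}^{kn} \lambda^{kn-t}\, B(t,n,k)\, \bfone_t.$$
Separately, applying the Rota-Baxter algebra homomorphism $\varepsilon\colon\csha_\lambda(x)\to\csha_\lambda(\RR)$ produced by the universal property of Theorem~\ref{thm:shua} from the $\RR$-algebra map $\RR[x]\to\RR$, $x\mapsto 1$ (so $\varepsilon(1\otimes x^{\otimes I})=\bfone_{\ell(I)}$) to Theorem~\ref{nk} gives
$$(\bfone_k)^n \;=\; \sum_{I\in\pi(kn)} C_I\, \lambda^{kn-\ell(I)}\,\bfone_{\ell(I)} \;=\; \sum_{t=k}^{kn}\left(\sum_{\substack{I\in\pi(kn)\\ \ell(I)=t}} C_I\right)\lambda^{kn-t}\,\bfone_t.$$
The index sets really match, since for $I\in\pi(kn)$ the length $\ell(I)$ ranges over exactly $[k,kn]$, with endpoints attained by $I=(n,\dots,n)$ and $I=(1,\dots,1)$. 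Matching coefficients of $\lambda^{kn-t}\bfone_t$ recovers the refined equality.

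Either route reduces the problem to summing the refined identity over $t$ and invoking Proposition~\ref{Cnk}. The combinatorial route seems preferable, as it is self-contained and explains in one stroke why the sum telescopes into $C(n,k)$. The algebraic route is included as a sanity check, since the identity essentially expresses that two natural generating functions for $(\bfone_k)^n$—one indexed by cover-type data, one indexed by composition type—must agree.
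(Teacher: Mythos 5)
Your combinatorial route is essentially the paper's own proof: the paper likewise establishes the refined identity $\sum_{I\in\pi(kn),\,\ell(I)=t}C_I=B(t,n,k)$ by reinterpreting $C_I$ (via the stuffle correspondence $C_I=S_I$ from the proof of Theorem~\ref{nk}) as the number of $n$-tuples $(T_1,\dots,T_n)$ of $k$-element subsets covering $[t]$ in which $j$ occurs $i_j$ times, and then sums over $t$ and invokes Proposition~\ref{Cnk}. Your secondary algebraic route, matching the coefficient of $\bfone_t$ in the two expansions of $(\bfone_k)^n$ coming from Corollary~\ref{g-Bnkl} and from the image of Theorem~\ref{nk} under $x\mapsto 1$, is a valid alternative derivation of the same refined identity but is not the argument the paper gives.
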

\begin{proof}
Given a $I\in \pi(kn)$, in the proof of Theorem \ref{nk}, we know
that $C_I=S_I$, where $S_I$ denotes the number of $n$-fold stuffles
of type $I$. Therefore, we have a equivalent illustration for $C_I$.
For any composition $I=(i_1, i_2,\cdots,i_t) \in \pi(kn)$, $C_I$
denotes the number of $n$-tuples $(T_1,T_2,\cdots,T_n)$ of subsets
$T_1, T_2, \cdots, T_n$ of $[t]$ such that $\#T_i=k$ and all the
elements in $T_1, T_2, \cdots, T_n$ are composed by $i_j$ $j's$, for
$1 \leq j \leq t$. If we take all the $t$-compositions in $\pi(kn)$,
then we don't need to consider the number of appearances of $j$ for
$1 \leq j \leq t$. Therefore, we have
\begin{equation*}
\sum_{\tiny \begin{array}{cc}I \in \pi(kn)\\
\ell(I)=t\end{array}}C_I=B(t,n,k),
\end{equation*}
where $B(t,n,k)$ is explained in
Definition \ref{B-cover}. According to Proposition \ref{Cnk}, we complete the proof.
\end{proof}

Now we consider some special cases of Theorem \ref{nk} for $\lambda=0,1$.
For $\lambda=0$, we have the following generating function.

\begin{corollary}\label{lambda0} We have
\begin{equation*}
\frac{1}{1-1\ot x^{\ot k}}=\sum_{n=0}^{\infty}(1\ot x^{\ot k})^n=\sum_{n=0}^{\infty}\frac{(kn)!}{(k!)^n}(1\ot x^{\ot (kn)})\in \widehat{\sha}_0(x).
\end{equation*}
\end{corollary}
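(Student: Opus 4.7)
The plan is to deduce this corollary as a direct specialization of Theorem~\ref{nk} to $\lambda = 0$. The key observation is that in the formula
$$\sum_{n=0}^{\infty}\sum_{I \in \pi(kn)}C_I \lambda^{kn-\ell(I)}(1\ot x^{\ot I}),$$
the factor $\lambda^{kn-\ell(I)}$ becomes $0^{kn-\ell(I)}$, which vanishes unless $\ell(I) = kn$. Since any composition $I \in \pi(kn)$ with $\ell(I) = kn$ must consist entirely of parts equal to $1$ (as the parts are positive integers summing to $kn$), the only surviving composition is $I = (\underbrace{1, 1, \ldots, 1}_{kn})$.

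First I would invoke Theorem~\ref{nk} to write $\sum_{n \ge 0}(1 \ot x^{\ot k})^n$ in the indicated form. Next, I would apply the convention $0^0 = 1$ so that only $I = (1, \ldots, 1)$ contributes in each $n$-th summand. Then by the first formula in Proposition~\ref{C1n}, we have
$$C_{(\underbrace{1,1,\ldots,1}_{kn})} = \frac{(kn)!}{(k!)^n},$$
and since $1 \ot x^{\ot I} = 1 \ot x^{\ot kn}$ for this $I$, the inner sum collapses to a single term $\frac{(kn)!}{(k!)^n}(1 \ot x^{\ot kn})$, yielding the claimed identity.

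There is no real obstacle here; the corollary is essentially a bookkeeping consequence of Theorem~\ref{nk}. As a sanity check, one could also derive the identity independently, bypassing Theorem~\ref{nk} entirely, by observing that Eq.~\eqref{lambda0-nk} of Proposition~\ref{thm:multsh} already gives $(1 \ot x^{\ot k})^n = \frac{(kn)!}{(k!)^n}(1 \ot x^{\ot kn})$ in $\sha_0(x)$, so summing over $n \ge 0$ in the completed algebra $\widehat{\sha}_0(x)$ yields the formula at once. This cross-check ensures consistency between the weight-zero shuffle calculation and the general stuffle enumeration from Theorem~\ref{nk}.
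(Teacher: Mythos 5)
Your proof is correct and follows essentially the same route as the paper: specialize Theorem~\ref{nk} to $\lambda=0$, note that only the composition $I=(1,\dots,1)$ of length $kn$ survives, and evaluate $C_I$ via Proposition~\ref{C1n}. The cross-check via Eq.~\eqref{lambda0-nk} is a nice consistency observation but not needed.
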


\begin{proof}
When $\lambda=0$, Theorem \ref{nk} reduces to
\begin{equation*}
\frac{1}{1-1\ot x^{\ot k}}=\sum_{n=0}^{\infty}\sum_{\tiny \begin{array}{c} I \in \pi(kn)\\ \ell(I)=kn \end{array}}
C_I(1\ot x^{\ot I})=\sum_{n=0}^{\infty}C_{(\underbrace{1,1,\cdots,1}_{kn})}(1\ot x^{\ot (kn)})
\end{equation*}
which gives what we need by Proposition \ref{C1n}.
\end{proof}

Setting $x=1$ in Corollary \ref{lambda0}, we obtain Eq.~\eqref{bfone_kl}.

By taking the special case when $\lambda=1$ and $k=1$ in Theorem~\mref{nk}, we obtain the following result in~\mcite{Gu3}.

\begin{corollary}
We have the generating function
$${ \frac{1}{1-(1\otimes x)}=\sum_{n=0}^\infty (1\otimes x)^n=\sum_{n=0}^\infty \sum_{|I|=n} \binc{n}{I} (1\otimes x^{\otimes I})
    \in \widehat{\sha}_1(x),}$$
where for $I=\emptyset$, take $|I|=0, \binc{|I|}{I}=1, (1\otimes x)^I = 1\otimes x^{\otimes I} =
1.$
\end{corollary}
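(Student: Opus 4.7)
The plan is to obtain the corollary as a direct specialization of Theorem~\ref{nk}. Setting $k=1$ in the statement of Theorem~\ref{nk} gives
$$\frac{1}{1-(1\otimes x)} = \sum_{n=0}^\infty (1\otimes x)^n = \sum_{n=0}^\infty \sum_{I\in \pi(n)} C_I\, \lambda^{n-\ell(I)} (1\otimes x^{\otimes I}),$$
so the task reduces to (i) interpreting the index set $\pi(n)$, (ii) evaluating $C_I$ when $k=1$, and (iii) plugging in $\lambda=1$.

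For step (i), when $k=1$ the set $\pi(kn)=\pi(n)$ consists of compositions of $n$ whose parts are at most $n$, but this upper bound is automatic since any single part of a composition of $n$ is at most $n$. Hence $\pi(n)$ is simply the set of all compositions of $n$, which I will reindex by writing $\sum_{|I|=n}$. Step (iii) is trivial: $\lambda^{n-\ell(I)}=1$ when $\lambda=1$, so this factor disappears from the summand.

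The substantive point is step (ii). When $k=1$ the multiset $S_{n,1}=\{1,2,\ldots,n\}$ is just the ordinary set $[n]$, and a restricted partition of type $I=(i_1,\ldots,i_t)\in \pi(n)$ is an ordered partition $\{B_1,\ldots,B_t\}$ of $[n]$ with $\#B_j=i_j$ (the condition $\#(B_j\cap\{j^k\})\le 1$ is automatic since $k=1$). The number of such ordered partitions is exactly the multinomial coefficient $\binom{n}{I}=\binom{n}{i_1,\ldots,i_t}$, so $C_I=\binom{n}{I}$. Substituting into the specialized formula yields the desired identity, with the boundary convention $I=\emptyset$, $|I|=0$, $\binom{0}{\emptyset}=1$, $1\otimes x^{\otimes \emptyset}=1$ handling the $n=0$ term.

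There is essentially no obstacle here; the only thing to double-check is that the convention that parts are positive (so that $\pi(n)$ genuinely enumerates compositions in the usual sense) matches the convention used in the definition of restricted partitions, and that the multinomial identification is compatible with the conventions fixed just before Proposition~\ref{C1n}. Once these bookkeeping points are verified, the corollary follows immediately from Theorem~\ref{nk}.
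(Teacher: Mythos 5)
Your proposal is correct and follows essentially the same route as the paper: specialize Theorem~\ref{nk} to $k=1$ and $\lambda=1$, note that $\pi(n)$ is then just the set of all compositions of $n$, and identify $C_I$ with the multinomial coefficient $\binc{n}{I}$ by counting ordered partitions of $[n]$ of type $I$. The only difference is that you spell out the bookkeeping (the vanishing of the weight factor and the $I=\emptyset$ convention) slightly more explicitly than the paper does.
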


\begin{proof}
Setting $\lambda=1$ and $k=1$ in Theorem \ref{nk}, we have
\begin{equation*}
\frac{1}{1-1\ot x}
=\sum_{n=0}^{\infty}(1\ot x)^n=\sum_{n=0}^{\infty}\sum_{I \in
\pi(n)}C_I(1\ot x^{\ot I}),
\end{equation*}
where $\pi(n)$ is the set of all the compositions of $n$. For a composition $I=(i_1,\cdots,i_t) \in \pi(n)$, the
coefficient $C_{I}$ is the number of ways to divide the $n$
elements in $[n]$ into $k$ disjoint nonempty subsets, where the $j$-th subset
contains $i_j$ elements, $1\leq j \leq k$. So we get
$C_I=\binc{n}{I},$ as needed.
\end{proof}

\smallskip

\noindent {\bf Acknowledgements: } Nancy Shanshan Gu was supported by the National
Natural Science Foundation of China and the PCSIRT
Project of the Ministry of Education. Li Guo acknowledges support
from NSF grant DMS 1001855.


\end{document}